\theoremstyle{plain}
\newtheorem{theorem}{Theorem}
\newtheorem{lemma}[theorem]{Lemma}
\newtheorem{cor}[theorem]{Corollary}
\newtheorem{prop}[theorem]{Proposition}
\newtheorem{remark}[theorem]{Remark}
\newtheorem{conj}[theorem]{Conjecture}
\newtheorem{defi}[theorem]{Definition}
\theoremstyle{plain}
\newcommand\blfootnote[1]{%
	\begingroup
	\renewcommand\thefootnote{}\footnote{#1}%
	\addtocounter{footnote}{-1}%
	\endgroup
}
\begin{document}

\title{
The  Neighbor-Locating-Chromatic Number  of Pseudotrees 
}

\author[2,3]{Liliana Alcon\thanks{ liliana@mate.unlp.edu.ar}}
\author[2,3]{Marisa Gutierrez\thanks{ marisa@mate.unlp.edu.ar}}
\author[1]{Carmen Hernando\thanks{Partially supp. by projects MTM2015-63791-R(MINECO/FEDER) and Gen.Cat. DGR2017SGR1336, carmen.hernando@upc.edu}}

\author[1]{Merc\`e Mora\thanks{Partially supported by projects MTM2015-63791-R (MINECO/FEDER), Gen.Cat. DGR2017SGR1336 and H2020-MSCA-RISE project 734922-CONNECT, merce.mora@upc.edu}}
\author[1]{Ignacio M. Pelayo\thanks{Partially supported by projects MINECO MTM2014-60127-P, ignacio.m.pelayo@upc.edu}}

\affil[1]{Departament de Matem\`atiques, Universitat Polit\`ecnica de Catalunya}

\affil[2]{CMaLP, Universidad Nacional de La Plata}

\affil[3]{CONICET, Argentina}

\date{}
\maketitle

\blfootnote{\begin{minipage}[l]{0.3\textwidth} \includegraphics[trim=10cm 6cm 10cm 5cm,clip,scale=0.15]{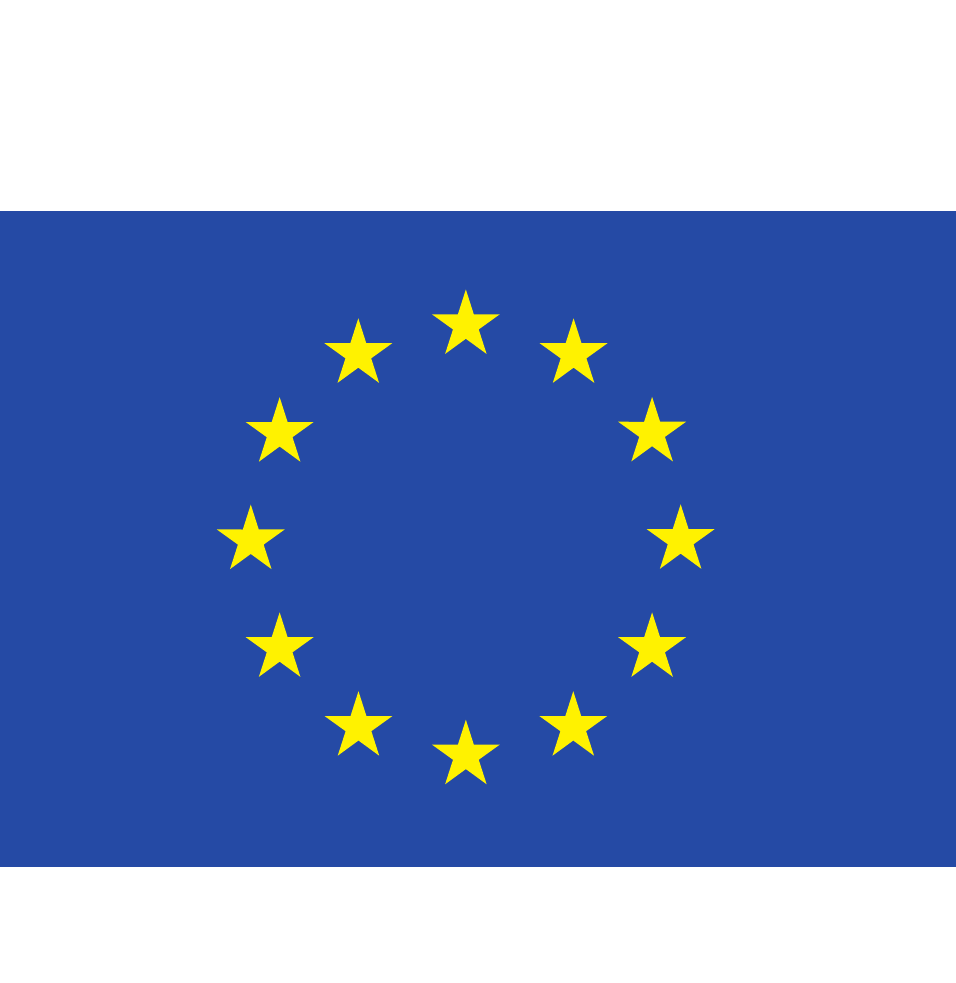} \end{minipage}  \hspace{-2cm} \begin{minipage}[l][1cm]{0.7\textwidth}
		This project has received funding from the European Union's Horizon 2020 research and innovation programme under the Marie Sk\l{}odowska-Curie grant agreement No 734922.
	\end{minipage}}

\begin{abstract}
A \emph{$k$-coloring} of a  graph $G$ is a partition of the vertices of $G$ into $k$ independent sets, which are  called \emph{colors}.
A $k$-coloring is  \emph{neighbor-locating} if any two vertices belonging to the same color can be distinguished from each other by the colors of their respective neighbors.
The \emph{neighbor-locating chromatic number} $\chi _{_{NL}}(G)$ is the minimum  cardinality of a neighbor-locating coloring of $G$.

\vspace{.02cm}
In this paper, we determine the neighbor-locating chromatic number of paths, cycles, fans and wheels.
Moreover, a procedure to construct a neighbor-locating coloring of minimum cardinality for these families of graphs is given.
We also obtain tight upper bounds on the order of trees and unicyclic graphs in terms of the  neighbor-locating chromatic number.
Further partial results for trees are also established.
\end{abstract}
\vspace{.01cm} \hspace{.95cm}{\it Key words:} coloring; location;
 neighbor-locating coloring; pseudotree.

\section{Introduction}

This work is devoted to studying a special type of vertex partitions, the so-called \emph{neighbor-locating colorings}, whithin 
the family of graphs known as \emph{pseudotrees}, that is to say, the set of all connected graphs containing at most one cycle.

There are mainly two types of location, metric location and neighbor location.
Metric-locating sets (also known as resolving sets) were introduced simultaneously in \cite{hararymelter,slater}, meanwhile 
neighbor-locating sets were  introduced in  \cite{slater2}. 
In  \cite{ChaSaZh00}, the notion of metric location was brought to the ambit of vertex partitions, and in \cite{cherheslzh02}, there were first studied  the so-called \emph{locating colorings}, i.e.,  locating partitions (also known as resolving partitions) formed by independents sets.
Both resolving partitions and locating colorings have been  extensively studied since then. 
See  e.g. \cite{chagiha08,fegooe06,ferogo14,gsrm,hemope16,royele14} and \cite{baas13,bean15,beom11,cherheslzh03,webasiut13}, respectively.

In \cite{xnl1},  we started the study of neighbor-locating  partitions formed by independent sets, which we named \emph{neighbor-locating colorings}. 
More specifically, we considered  vertex colorings such that any two vertices with the same color can be distinguished  from each other by the colors of their respective neighbors.
This  paper continues that line of research focusing on the neighbor-locating colorings of pseudotrees, i.e., of paths, cycles, trees and unicyclic graphs.

\vspace{-0.1cm}
\subsection{Basic terminology}

All the graphs considered in this paper are  connected, undirected, simple and finite. 
The vertex set and the edge set of a graph $G$ are denoted by $V(G)$ and $E(G)$, respectively. 
We let $n(G)$ be the order of $G$, i.e. $n(G)=|V(G)|$.
The \emph{neighborhood} of a vertex $v\in V(G)$ is the set $\displaystyle N(v)=\{w \in V(G):vw \in E(G)\}$. 
The \emph{degree} of $v$, defined as the cardinality of $N(v)$, is denoted by $\deg(v)$.
When $\deg(v)=1$, $v$ is called a \emph{leaf}. 
The \emph{maximum degree} $\Delta(G)$ of $G$ is defined to be $\Delta(G) = \max\{deg(v) : v \in V(G)\}$. 
The \emph{distance} between two vertices $v,w\in V(G)$ is denoted by $d(v,w)$. 
The  \emph{diameter} of $G$ is ${\rm diam}(G) = \max\{d(v,w) : v,w \in V(G)\}$.

Let  $\Pi=\{S_1,\ldots,S_k\}$ be a $k$-partition  of $V(G)$, i.e., a partition of the set of vertices of $G$ into $k$  subsets. 
If all the elements  of  $\Pi$ are independent sets, then  they are called \emph{colors} and  $\Pi$ is said to be a \emph{coloring} of $G$ (also a $k$-\emph{coloring} of $G$).
We say that a vertex $v\in V(G)$ has color $i$, or that is colored with $i$, whenever $v\in S_i$. 
   
\vspace{0.1cm}
Given a $k$-coloring $\Pi=\{S_1,\ldots,S_k\}$ of a graph $G$ and a vertex $v\in V(G)$, the  \emph{color-degree} of  $v$  is defined to be the number of different colors of $\Pi$ containing some vertex of $N(v)$, i.e., 
$ |\{ j :  N(v)\cap S_j\not= \emptyset \}| $.

\vspace{-0.2cm}
\subsection{Neighbor-locating colorings}

		A coloring  $\Pi=\{ S_1,\dots ,S_k\}$ of a graph $G$ is called a  \emph{ neighbor-locating coloring}, an \emph{NL-coloring} for short, if for every pair of  different vertices $u,v$ belonging to the same color  $S_i$, the set of colors of the neighborhood of $u$ is different from the set of colors of the neighborhood of $v$, that is,
		$$ \{ j :  N(u)\cap S_j\not= \emptyset \} \not= \{j :  N(v)\cap S_j\not= \emptyset  \}.$$
		
		The \emph{neighbor-locating chromatic number} $\chi _{_{NL}}(G)$, the \emph{NLC-number} for short,  is the minimum  cardinality of an NL-coloring of $G$.

Both neighbor-locating colorings and the neighbor-locating chromatic number of a graph were introduced in \cite{xnl1}.
As a straightforward consequence of these definitions the following remark is derived.

\begin{remark}\label{color_deg}
	Let $\Pi=\{S_1,\ldots,S_k\}$ be a  $k$-NL-coloring of a graph $G$ order $n$ and maximum degree $\Delta$. 
	For every $1\leq i \leq k$, there are at most $ \binom{k-1}{j}$ vertices in $ S_i$ of  color-degree $j$,  where $1\leq j \leq k-1$ and, consequently, 
	$\displaystyle |S_i|\le \sum_{j=1}^\Delta \binom{k-1}{j}$.
\end{remark}

\vspace{-.1cm}
\begin{theorem}[\cite{xnl1}]\label{gb}
Let $G$ be a non-trivial connected graph of order $n(G)=n$ and maximum degree $\Delta(G)=\Delta$
such that  $\chi _{_{NL}}(G)=k$. Then,
\begin{enumerate}[{\rm(1)}]
\item $\displaystyle{n\le k\, (2^{k-1}-1)}$. Moreover, this bound is tight.
\item If $\Delta\le k-1$, then $\displaystyle{n\le k\, \sum_{j=1}^{\Delta } \binom{k-1}{j}}.$
\end{enumerate}
\end{theorem}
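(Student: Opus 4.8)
The plan is to read off both inequalities from Remark~\ref{color_deg} and then to construct, for every $k\ge 2$, an extremal graph for part~(1). Recall the reasoning behind the remark: if $v\in S_i$, then, since $S_i$ is independent, no neighbor of $v$ lies in $S_i$, so the neighborhood color-set $C(v)=\{j:N(v)\cap S_j\neq\emptyset\}$ is a subset of $\{1,\dots,k\}\setminus\{i\}$; moreover its size, the color-degree of $v$, is at least $1$ (because $G$ is non-trivial and connected, so $\deg(v)\ge 1$) and at most $\min\{\Delta,k-1\}$. Since $\Pi$ is an NL-coloring, the assignment $v\mapsto C(v)$ is injective on $S_i$, hence the number of vertices of $S_i$ of color-degree $j$ is at most $\binom{k-1}{j}$. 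Summing over the admissible values of $j$ gives $|S_i|\le\sum_{j=1}^{\min\{\Delta,k-1\}}\binom{k-1}{j}$ for each color, and adding over the $k$ colors yields $n\le k\sum_{j=1}^{\min\{\Delta,k-1\}}\binom{k-1}{j}$. If $\Delta\le k-1$ this is precisely the bound in~(2). In general one enlarges the range of $j$ up to $k-1$ and uses $\sum_{j=1}^{k-1}\binom{k-1}{j}=2^{k-1}-1$ to obtain $n\le k(2^{k-1}-1)$, which is~(1).

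For the tightness of~(1) I would exhibit, for each $k\ge2$, a graph $G_k$ with $\chi_{_{NL}}(G_k)=k$ and $n(G_k)=k(2^{k-1}-1)$. Take as vertex set all pairs $(i,A)$ with $1\le i\le k$ and $\emptyset\neq A\subseteq\{1,\dots,k\}\setminus\{i\}$, and join $(i,A)$ to $(j,B)$ exactly when $j\in A$ and $i\in B$. Let $S_i$ consist of the vertices with first coordinate $i$. Then $|V(G_k)|=k(2^{k-1}-1)$, and $S_i$ is independent because an edge inside $S_i$ would require $i\in A$. The key verification is that the neighborhood color-set of $(i,A)$ equals $A$: any neighbor $(j,B)$ has $j\in A$, and conversely, for each $j\in A$ the vertex $(j,\{i\})$ is a neighbor of $(i,A)$ and lies in $S_j$. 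Consequently distinct vertices of $S_i$, which carry distinct sets $A$, have distinct neighborhood color-sets, so $\Pi=\{S_1,\dots,S_k\}$ is a $k$-NL-coloring. A short argument shows $G_k$ is connected: the ``singleton'' vertices $(i,\{j\})$ form one component (for instance $(i,\{j\})$ and $(j,\{i\})$ are adjacent, and $(i,\{j\})$ and $(i,\{l\})$ are joined along $(j,\{i\}),(i,\{j,l\}),(l,\{i\})$ when $k\ge3$), and every vertex $(i,A)$ has a singleton neighbor $(j,\{i\})$ with $j\in A$; for $k=2$, $G_2=K_2$. Hence $\chi_{_{NL}}(G_k)\le k$, while part~(1) forces $\chi_{_{NL}}(G_k)\ge k$ since $n(G_k)=k(2^{k-1}-1)$ and the right-hand side of the bound is increasing in $k$; therefore $\chi_{_{NL}}(G_k)=k$ and the bound is attained.

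The two inequalities are essentially bookkeeping on top of Remark~\ref{color_deg}, so I expect no difficulty there. The substantive part is the extremal construction, and within it the only genuine obstacle is connectivity: the adjacency rule is designed so that, inside each color class, every admissible neighborhood pattern (every nonempty subset of the other $k-1$ colors) occurs exactly once, which is precisely what makes the construction both an NL-coloring and of maximum order; but one must still choose that rule so that the graph does not fall apart, and verify the connecting paths and the degenerate small cases ($k=2$) carefully.
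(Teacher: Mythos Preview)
The paper does not prove this theorem; it is quoted from \cite{xnl1} and stated without proof, so there is no argument in the present paper to compare against. Your derivation of the two inequalities is exactly the intended use of Remark~\ref{color_deg}: the map $v\mapsto\{j:N(v)\cap S_j\neq\emptyset\}$ is injective on each color class, the target sets are nonempty subsets of a $(k-1)$-element set of size at most $\min\{\Delta,k-1\}$, and summing over color classes gives both bounds. That part is routine and correct.

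Your tightness construction is also sound. The graph on pairs $(i,A)$ with $\emptyset\neq A\subseteq\{1,\dots,k\}\setminus\{i\}$ and the adjacency rule ``$j\in A$ and $i\in B$'' does what you claim: each $S_i$ is independent, the neighborhood color-set of $(i,A)$ is exactly $A$ (via the neighbors $(j,\{i\})$), so $\{S_1,\dots,S_k\}$ is a $k$-NL-coloring of a graph of order $k(2^{k-1}-1)$; your connectivity argument through the singleton vertices is correct, including the degenerate case $k=2$. The lower bound $\chi_{_{NL}}(G_k)\ge k$ then follows from part~(1) and the strict monotonicity of $k\mapsto k(2^{k-1}-1)$. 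I have no objections to the argument; the only point one might streamline is the connectivity check, but your path $(i,\{j\})\!-\!(j,\{i\})\!-\!(i,\{j,l\})\!-\!(l,\{i\})\!-\!(i,\{l\})$ already handles it cleanly for $k\ge3$.
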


To simplify the writing, given a $k$-NL-coloring of a graph $G$ with maximum degree $\Delta$, we denote by $a_j(k)$ the maximum number of vertices of color-degree $j$ and by $\ell(k)$, the maximum number of vertices of color-degree $1$ or $2$, where $k\ge 3$ and $1\le j\le \Delta$.
By Remark~\ref{color_deg}, we have:
\begin{itemize}
	\item $\displaystyle a_1(k)=k\cdot(k-1)$, \hspace{.2cm}
	$\displaystyle a_2(k)=\frac{k\cdot(k-1)(k-2)}{2}$.
	\item $\displaystyle \ell(k)=a_1(k)+a_2(k)=k\cdot\binom{k}{2}=\frac{k^3-k^2}{2}$.
\end{itemize}

\vspace{.2cm}
The remaining part of this paper is organized as follows. 
In Section~\ref{pn.and.cn}, the neighbor-locating chromatic number of paths and of  cycles is determined.
Section~\ref{unicyclics} deals with unicyclic graphs, providing  a tight  upper bound on the order of a unicyclic graph with a given fixed  neighbor-locating chromatic number.
In Section~\ref{s.trees}, the neighbor-locating colorings of trees is studied. Among other results, 
a tight   upper bound on the order  of a tree with a given fixed NLC-number.
Finally, in Section~\ref{op}, we summarize our results and pose some open problems.

\vspace{.3cm}
\section{Paths and Cycles}\label{pn.and.cn}

\vspace{.2cm}
This section is devoted to determine the NLC-number of all paths and cycles, i.e., all graphs with maximum degree $\Delta(G)=2$. 

\vspace{.1cm}
\begin{prop} \label{pro.ordenlk}
If $G$ is a graph of order $n$ and maximum degree  $\Delta(G)=2$ such that $\chi _{_{NL}}(G)=k\ge3$, then 
$n(G) \le \ell (k)$.
\end{prop}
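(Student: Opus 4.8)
The plan is to derive the bound directly from the counting principle behind Remark~\ref{color_deg} (equivalently, from part~(2) of Theorem~\ref{gb}), after first pinning down which color-degrees can occur.

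First I would observe that the hypotheses already force every vertex to have small color-degree: since $G$ is connected and non-trivial with $\Delta(G)=2$, every vertex of $G$ has degree $1$ or $2$, and hence under \emph{any} coloring every vertex has color-degree $1$ or $2$ (a leaf has color-degree $1$; a degree-$2$ vertex whose two neighbors share a color has color-degree $1$; otherwise it has color-degree $2$). In particular there is no vertex of color-degree $\ge 3$. Now fix a minimum NL-coloring $\Pi=\{S_1,\dots,S_k\}$ with $k=\chi_{_{NL}}(G)\ge 3$. By Remark~\ref{color_deg}, each color class $S_i$ contains at most $\binom{k-1}{1}$ vertices of color-degree $1$ and at most $\binom{k-1}{2}$ vertices of color-degree $2$; since these are the only color-degrees that appear, $|S_i|\le \binom{k-1}{1}+\binom{k-1}{2}=\binom{k}{2}$ by Pascal's identity, and summing over the $k$ classes gives $n=\sum_{i=1}^{k}|S_i|\le k\binom{k}{2}=\ell(k)$. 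Equivalently, the total number of color-degree-$1$ vertices is at most $a_1(k)$ and of color-degree-$2$ vertices at most $a_2(k)$, so $n\le a_1(k)+a_2(k)=\ell(k)$; or one may simply note that $k\ge 3$ yields $\Delta(G)=2\le k-1$, so Theorem~\ref{gb}(2) applies and gives $n\le k\sum_{j=1}^{2}\binom{k-1}{j}=\ell(k)$.

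There is essentially no hard step here; the only points requiring care are (i) verifying that no vertex can have color-degree exceeding $2$, which is exactly where $\Delta(G)=2$ enters, and (ii) using $k\ge 3$ to ensure $\Delta\le k-1$ so that the bound of Remark~\ref{color_deg}/Theorem~\ref{gb}(2) is valid and the arithmetic collapses to $\ell(k)$. I would write out the short self-contained counting argument via Remark~\ref{color_deg} rather than merely cite Theorem~\ref{gb}(2), because it makes explicit that only color-degrees $1$ and $2$ occur in a path or a cycle — a fact that will be reused when the exact value of $\chi_{_{NL}}$ for these families is determined later in the section.
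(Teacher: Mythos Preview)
Your argument is correct and follows exactly the paper's approach: observe that $\Delta(G)=2$ forces every vertex to have color-degree at most $2$, and then apply the counting from Remark~\ref{color_deg} to obtain $n\le a_1(k)+a_2(k)=\ell(k)$. The paper's proof is the one-line version of what you wrote.
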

\begin{proof}
	Since all vertices have color-degree at most 2, we have $n\le a_1(k)+a_2(k)=\ell (k)$.
\end{proof}

\vspace{.15cm}
\begin{cor}\label{pro.nn}
	If $G$ is either a path or a cycle  such that $n(G)>\ell (k-1)$, then $\chi_{_{NL}}(G)\ge k$.
\end{cor}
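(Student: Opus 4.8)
The statement is the contrapositive of Proposition~\ref{pro.ordenlk} combined with the monotonicity of $\ell$, so the plan is short. Suppose, for contradiction, that $G$ is a path or a cycle with $n(G)>\ell(k-1)$ but $\chi_{_{NL}}(G)=k'\le k-1$. Since $G$ is a path or a cycle we have $\Delta(G)\le 2$, so Proposition~\ref{pro.ordenlk} will apply once we know $k'\ge 3$.

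First I would dispose of the degenerate possibilities $k'\le 2$. If $k'=1$ then $G$ has no edges, impossible for a connected graph on $n(G)>\ell(k-1)\ge 2$ vertices. If $k'=2$, fix a $2$-NL-coloring $\{S_1,S_2\}$; every vertex of $S_1$ is adjacent only to vertices of $S_2$ and (being in a connected graph of order at least $2$) has nonempty neighborhood, so its set of neighbor-colors is forced to be $\{2\}$; by the neighbor-locating property $|S_1|\le 1$, and symmetrically $|S_2|\le 1$, giving $n(G)\le 2$, again a contradiction. Hence $k'\ge 3$.

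Now Proposition~\ref{pro.ordenlk} gives $n(G)\le \ell(k')$. Since $\ell(t)=\tfrac{t^3-t^2}{2}$ is strictly increasing for $t\ge 2$, and $3\le k'\le k-1$, we get $\ell(k')\le \ell(k-1)$, whence $n(G)\le \ell(k-1)$, contradicting the hypothesis $n(G)>\ell(k-1)$. Therefore $\chi_{_{NL}}(G)\ge k$.

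There is essentially no obstacle here; the only points requiring a line of care are the monotonicity of $\ell$ (immediate from the closed formula established in the excerpt) and the handling of the colorings with at most two colors, which is why I spell that case out explicitly rather than invoking Proposition~\ref{pro.ordenlk}, whose statement assumes $k\ge 3$.
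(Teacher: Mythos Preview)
Your proof is correct and follows exactly the paper's approach: the paper's proof is a single sentence invoking Proposition~\ref{pro.ordenlk} together with the fact that $\ell$ is increasing. Your extra paragraph disposing of the cases $k'\le 2$ is a welcome bit of rigor (since Proposition~\ref{pro.ordenlk} is stated only for $k\ge 3$), but it does not change the underlying argument.
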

\begin{proof} This inequality directly  follows from Proposition~\ref{pro.ordenlk} taking into account that $\ell$ is an increasing function.	
\end{proof}

\vspace{.15cm}
\begin{remark} \label{pro.verticeslk}
Let  $\{S_1,\ldots,S_k\}$ be a $k$-NL-coloring of a graph $G$ of order $n=\ell (k)$ and maximum degree $\Delta(G)=2$. If $k\ge 3$, then for every $i\in \{1,\ldots,k\}$:
\begin{enumerate}[(1)]	
		\item $\displaystyle |S_i|=\binom{k}{2}$;
		
		\item there are exactly $\binom{k-1}{2}$ vertices in $S_i$ of color-degree 2; 
		
		\item there are exactly $k-1$ vertices in $S_i$ of  color-degree 1.	
\end{enumerate}
\end{remark}
\begin{proof}
Since $\Delta(G) =2$, by Remark~\ref{color_deg}  we have $|S_i|\le \binom{k-1}{1}+\binom{k-1}{2}=\binom{k}{2}$, for every $i\in \{ 1,\dots ,k\}$.
If $|S_i|<\binom{k}{2}$ for some $i\in \{ 1,\dots ,k\}$, then
 $n(G)=\sum_{j=1}^{k}|S_j|< k\binom k2=\ell (k)$, a contradiction. Hence, (1) holds.
Moreover, if, for some $i\in \{ 1,\dots ,k\}$, the number of vertices of color-degree 2 is less than $\binom{k-1}2$ or 
the number of vertices of color-degree 1 is less than $k-1$, then $|S_i|<\binom{k-1}1 + \binom{k-1}2=\binom k2$,
which contradicts (1).
\end{proof}

\vspace{.001cm}
\begin{prop}\label{pro.nolkmenos1}
    If $k\ge 3$, then $\chi_{_{NL}}(C_{\ell(k)-1})\ge k+1$.
\end{prop}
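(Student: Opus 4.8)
The plan is to argue by contradiction: suppose $\chi_{_{NL}}(C_{\ell(k)-1})=k$, and derive a structural impossibility from the fact that the cycle has exactly one vertex fewer than the extremal order $\ell(k)$. Throughout I would use Remark~\ref{pro.verticeslk} as a template: a $k$-NL-coloring of a cycle of order $\ell(k)$ would be ``perfectly balanced'', with every color class of size $\binom{k}{2}$ containing exactly $k-1$ vertices of color-degree $1$ and $\binom{k-1}{2}$ vertices of color-degree $2$. Removing one vertex can only relax this by one unit somewhere, so a hypothetical $k$-NL-coloring of $C_{\ell(k)-1}$ must still be almost perfectly balanced: exactly one color class, say $S_{i_0}$, has size $\binom{k}{2}-1$ and all others have size $\binom{k}{2}$, and the deficit is confined to a single color-degree value in $S_{i_0}$.

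The first real step is to analyze the color-degree-$1$ vertices. In a cycle $C_n$ a vertex has color-degree $1$ precisely when its two neighbors share a color, so these vertices are the centers of monochromatic paths of length $2$; I would count, for each pair (color of center, color of the two neighbors), how many such configurations can occur, recovering that there are at most $k-1$ color-degree-$1$ vertices of each color and hence at most $a_1(k)=k(k-1)$ in total. The second step handles color-degree-$2$ vertices: a vertex $v$ of color $i$ with neighbors colored $\{p,q\}$ (with $p,q\neq i$, $p\neq q$) is determined by the ordered triple of colors along the cycle, and two vertices of color $i$ with the same neighbor-color-set must be ``oppositely oriented'', i.e. one reads $p,i,q$ and the other $q,i,p$ along the cycle. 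The key counting point is that attaining the full bound $a_1(k)+a_2(k)=\ell(k)$ forces, for each color $i$ and each unordered pair $\{p,q\}$, both orientations $p\,i\,q$ and $q\,i\,p$ to appear exactly once, and every monochromatic $P_3$-center pattern to appear exactly once. I would then show that dropping to $\ell(k)-1$ vertices makes this system of ``exactly once'' requirements infeasible.

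The cleanest way to finish, I expect, is a parity/closure argument on the cyclic sequence of colors. Walking around the cycle produces a closed walk in an auxiliary multigraph whose vertices are the $k$ colors and whose edges record consecutive color changes (with monochromatic steps contributing loops or being contracted); the perfectly balanced coloring corresponds to an Eulerian-type decomposition of a specific symmetric multigraph on $k$ vertices, and this multigraph has all degrees even, so an Eulerian closed walk of exactly that length exists only for the full order $\ell(k)$. Removing a single vertex destroys the degree-parity or edge-multiplicity conditions needed to close the walk, so no $k$-NL-coloring of $C_{\ell(k)-1}$ can exist, giving $\chi_{_{NL}}(C_{\ell(k)-1})\ge k+1$.

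The main obstacle will be the second step: pinning down exactly how the ``almost balanced'' deficit of one vertex propagates through the orientation constraints on color-degree-$2$ vertices. It is easy to see each color class loses at most one vertex, but showing that the single missing vertex cannot be absorbed by some clever rearrangement of orientations around the cycle — i.e. that the extremal configuration of Remark~\ref{pro.verticeslk} is rigid enough that no $(\ell(k)-1)$-vertex variant survives — is the delicate part, and is where the auxiliary-multigraph / Eulerian-walk bookkeeping has to be made precise rather than waved at.
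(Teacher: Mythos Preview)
Your setup is right --- the contradiction hypothesis forces exactly one color class, say $S_k$, to have size $\binom{k}{2}-1$ while every other $S_i$ is ``full'' of size $\binom{k}{2}$ --- but from there you overcomplicate things and introduce real errors.

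First, your orientation claim is wrong. In an NL-coloring there can be at most \emph{one} vertex of color $i$ with neighbor-color-set $\{p,q\}$, period. The patterns $p\,i\,q$ and $q\,i\,p$ along the cycle encode the \emph{same} unordered neighbor-set, so they cannot both occur; your statement that ``both orientations appear exactly once'' in the extremal case directly contradicts the definition of an NL-coloring.

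Second, the Eulerian parity idea yields no obstruction. If you build the auxiliary multigraph on the $k$ colors with one edge per consecutive pair in the cyclic color sequence, then the degree of color $i$ is exactly $2|S_i|$, which is even regardless of whether $|S_i|=\binom{k}{2}$ or $\binom{k}{2}-1$. Nothing breaks at the level of closing the walk, so the ``delicate part'' you flag is not just unfinished --- it does not work as stated.

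The paper's argument bypasses all of this with a direct double count of edges into the deficient class. Since each $S_i$ with $i\neq k$ is full, every admissible neighbor-color-set for color $i$ occurs exactly once in $S_i$. In particular, the vertices of $S_i$ with a neighbor in $S_k$ are: the unique vertex with neighbor-set $\{k\}$ (both neighbors in $S_k$), and for each $j\neq i,k$ the unique vertex with neighbor-set $\{j,k\}$. Hence the number of edges from $S_i$ to $S_k$ is $2+(k-2)=k$. Summing over $i=1,\dots,k-1$ gives $k(k-1)$ edges entering $S_k$; but every vertex of $S_k$ has both its neighbors outside $S_k$, so this count is also $2|S_k|$. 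Therefore $|S_k|=\binom{k}{2}$, contradicting $|S_k|=\binom{k}{2}-1$. That is the entire proof --- no orientations, no auxiliary graphs, no rigidity analysis.
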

\begin{proof}
	It is easy to check that $\ell (k)-1 =\frac{k^3-k^2}{2}-1> \frac{(k-1)^3-(k-1)^2}{2}=\ell (k-1)$, if $k\ge 3$. Hence, by Corollary~\ref{pro.nn}  we have $\chi_{_{NL}}(C_{\ell(k)-1})\ge k$.
    
    Suppose that, on the contrary, $\chi_{_{NL}}(C_{\ell(k)-1}) = k$ and consider a $k$-NL-coloring  $\{S_1,\dots, S_k\}$ of  $C_{\ell(k)-1}$.
    Similarly as argued in the proof of Remark~\ref{pro.verticeslk}, there must be exactly $\binom k2$ vertices of all but one of the $k$ colors, and $\binom k2-1$ vertices of the remaining color. We may assume without loss of generality that 
    $|S_k|=\binom k2-1$ and 
    $|S_i|=\binom k2$,
    whenever $i\neq k$.
    To attain this number of vertices, for each color $i\in \{1,\dots , k-1 \}$, there must be a vertex  of color-degree 1 in $S_i$ with both neighbors in $S_k$ and, for every $j\not= i,k$, there must be a vertex of color-degree 2 in $S_i$ with a neighbor in $S_k$ and the other in $S_j$.
    Besides, both neighbors of a vertex of $S_{k}$ belong to $S_1\cup \dots \cup S_{k-1}$.
    Hence, if we sum the number of neighbors colored with $k$ for all the vertices belonging to  $S_1\cup \dots \cup S_{k-1}$, we count exactly twice each vertex of $S_k$.
    Therefore, $2|S_k|=(k-1) (2+(k-2))=k(k-1)$, contradicting that $|S_k|=\binom k2-1=\frac{k(k-1)}2-1$.
\end{proof}

Next theorem establishes the NLC-number of paths and cycles of small order.

\begin{theorem}\label{pro.base} The values of the NLC-number for paths and cycles of order at most $9$ are:

\begin{enumerate}[\rm (1)]
\item  $\chi_{_{NL}}(P_{2})=2$.
\item  $\chi_{_{NL}}(P_{n})=3$, if $3\le n\le 9$.
\item  $\chi_{_{NL}}(C_{n})=3$, if $n\in \{ 3,5,7,9\}$.
\item   $\chi_{_{NL}}(C_{n})=4$, if $n\in \{ 4,6,8 \}$
\end{enumerate}

\end{theorem}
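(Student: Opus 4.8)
The plan is to handle the lower bounds and upper bounds separately for each case, using the general machinery already in place. For the lower bounds: part (1) is immediate since $P_2$ has an edge, so $\chi_{_{NL}}(P_2)\ge 2$, and the $2$-coloring of $P_2$ is trivially neighbor-locating. For parts (2)--(4), note that with $k=2$ colors on a path or cycle every vertex has color-degree $1$ or (for internal vertices) at most $2$, but with only two colors Remark~\ref{color_deg} gives $|S_i|\le\binom{1}{1}+\binom{1}{2}=1$, so $n\le 2$; hence any path or cycle with $n\ge 3$ satisfies $\chi_{_{NL}}\ge 3$. This disposes of the lower bound ``$\ge 3$'' in (2) and (3) at once. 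For (4), the extra work is to rule out a $3$-NL-coloring of $C_4,C_6,C_8$; here I would invoke Proposition~\ref{pro.nolkmenos1} with $k=3$, which gives $\chi_{_{NL}}(C_{\ell(3)-1})=\chi_{_{NL}}(C_8)\ge 4$, and then argue directly for $C_4$ and $C_6$ (both small enough to analyze by hand: in $C_4$ two antipodal vertices in the same color class would have identical neighbor-color sets; in $C_6$ a short counting argument modeled on the proof of Remark~\ref{pro.verticeslk} shows $3$ colors cannot realize the required color-degree distribution). So $\chi_{_{NL}}(C_n)\ge 4$ for $n\in\{4,6,8\}$.

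For the upper bounds, the task is purely constructive: exhibit, for each relevant $n$, an explicit $3$-coloring (resp.\ $4$-coloring) that is neighbor-locating. For the paths $P_n$ with $3\le n\le 9$, I would write the color sequence along the path and check that (a) it is proper and (b) any two equally-colored vertices have distinct neighbor-color sets. The key point is that the two endpoints have color-degree $1$ and every internal vertex has color-degree $1$ or $2$, and there are $a_1(3)=6$ possible ``type'' labels of color-degree $1$ and $a_2(3)=3$ of color-degree $2$, for $\ell(3)=9$ in total — exactly enough to color $P_9$, and any pattern that uses each neighbor-color-set at most once within each color class works. For the cycles $C_n$ with $n\in\{3,5,7,9\}$ I would do the same with $3$ colors (every vertex has color-degree $2$ on a cycle of length $\ge 3$ except $C_3$ where it is $2$ as well), using that $C_3$, $C_5$, $C_7$, $C_9$ have odd length so a near-balanced proper $3$-coloring exists, and then verifying the neighbor-color sets are all distinct within each color. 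For $C_4,C_6,C_8$ the matching $4$-colorings are small enough to display explicitly and verify by inspection.

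The main obstacle I anticipate is not any single hard idea but the bookkeeping: for each of the (finitely many, but numerous) small graphs $P_3,\dots,P_9$ and $C_3,\dots,C_9$ one must produce a specific valid coloring and certify the neighbor-locating property, and for the cycles $C_4$ and $C_6$ (not covered by Proposition~\ref{pro.nolkmenos1}) one must also give a short ad hoc impossibility argument for $k=3$. A clean way to organize the constructions is to encode each vertex by the pair (its color, the set of colors on its neighbors) and observe that a coloring is an NL-coloring precisely when this encoding is injective; then each small case reduces to checking that a short explicitly listed sequence of such pairs has no repeats. I would present the constructions compactly (e.g.\ as color strings such as $1231232\ldots$) and relegate the verification to a sentence each, since it is routine. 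The impossibility arguments for $C_4$ and $C_6$ are the only genuinely non-mechanical steps, and both follow the parity/counting template already used in Proposition~\ref{pro.nolkmenos1}.
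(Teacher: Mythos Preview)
Your proposal follows essentially the same route as the paper: lower bounds via the counting in Remark~\ref{color_deg} and Proposition~\ref{pro.nolkmenos1}, upper bounds by explicit small colorings, with ad~hoc arguments for $C_4$ and $C_6$. Two small corrections are worth flagging. First, your claim that a ``short counting argument modeled on the proof of Remark~\ref{pro.verticeslk}'' disposes of $C_6$ is over-optimistic: since $6<\ell(3)=9$, the color classes need not be full, and no straightforward size or parity count yields a contradiction. The paper simply says an exhaustive analysis of all proper $3$-colorings of $C_6$ shows none is neighbor-locating, and indeed that is what one ends up doing. Second, your parenthetical that ``every vertex has color-degree~$2$ on a cycle'' is false---a vertex whose two neighbors share a color has color-degree~$1$, and such vertices must occur in any $3$-NL-coloring of $C_5$, $C_7$, $C_9$ (since $a_2(3)=3<n$). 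This does not break your plan, but it means the explicit colorings you exhibit will necessarily mix color-degree~$1$ and~$2$ vertices, so be careful when verifying them. For $C_4$ your antipodal-pair argument is fine; the paper instead quotes the general fact that complete multipartite graphs have $\chi_{_{NL}}=n$.
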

\begin{proof}
    Trivially, $P_2$ is the only graph $G$ with $\chi_{_{NL}} (G)=2$. According to  Proposition~\ref{pro.ordenlk}, the order of a path or a cycle with NLC-number equal to 3  is at most 9. 
    For $P_n$ with $3\le n\le 9$ and for $C_n$ with $n\in \{  3,5,7,9 \}$, a 3-NL-coloring is displayed in Figure~\ref{colorsmallcycles}.

    \begin{figure}[!hbt]
        \begin{center}
            \includegraphics[width=0.6\textwidth,page=1]{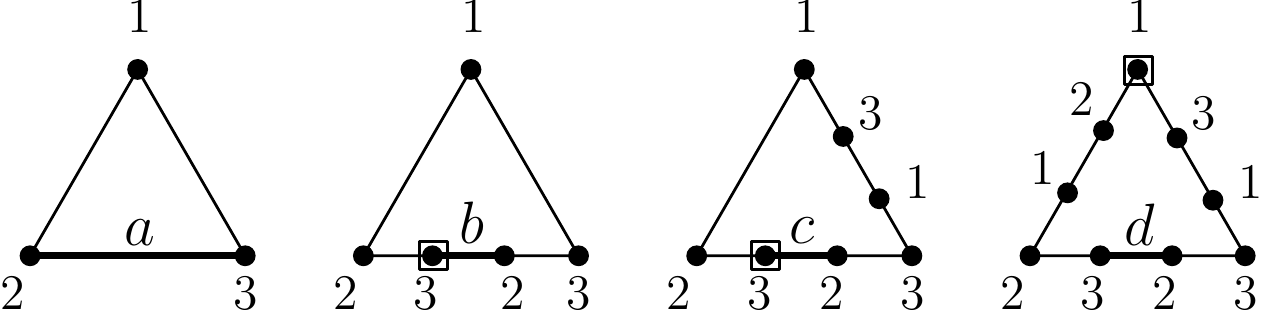}
            \caption{From left to right, a 3-NL-coloring of  cycles $C_{3}$, $C_{5}$, $C_{7}$ and $C_{9}$.
                A 3-NL-coloring of  paths $P_{4}$, $P_{6}$ and $P_{8}$ can be obtained by removing the squared vertices
                and a 3-NL-coloring of  paths $P_{3}$, $P_{5}$, $P_{7}$ and $P_{9}$ can be obtained by removing the edges $a$, $b$, $c$ and $d$, respectively.}\label{colorsmallcycles}
        \end{center}
    \end{figure}


    Clearly, $\chi_{_{NL}}(C_{4})=4$ since, as was proved in \cite{xnl1}, for every complete multipartite graph $G$ of order $n$, $\chi_{_{NL}}(G)=n$.
    
    An exhaustive analysis of all possible cases shows that $\chi_{_{NL}} (C_6)\ge 4$. Clearly, a 4-NL-coloring for $C_6$ can be obtained by inserting a vertex colored with 4 in any edge of the 3-NL-coloring given for $C_5$ in Figure~\ref{colorsmallcycles}. Hence, $\chi_{_{NL}}(C_{6})=4$.
    
 By Proposition~\ref{pro.nolkmenos1}, $\chi_{_{NL}} (C_8)\ge 4$.
 Clearly, a 4-NL-coloring for $C_8$ can be obtained by inserting a vertex colored with 4 in any edge of the 3-NL-coloring given for $C_7$ in Figure~\ref{colorsmallcycles}. Hence, $\chi_{_{NL}}(C_{8})=4$.
\end{proof}

In what follows, several technical results are  given. 
They are needed to prove Theorem~\ref{thm.chi_CiclosCaminos}, where the  NLC-number of all paths and cycles of  order greater than 9 is determined.

\begin{lemma}\label{pro.deg1deg2}
In any NL-coloring of the cycle $C_n$, every vertex of color-degree $1$ has at least one  neighbor of color-degree 2.
\end{lemma}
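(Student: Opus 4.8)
The plan is to argue by contradiction, using the elementary fact that in a cycle every vertex has exactly two neighbours, so a vertex fails to have a neighbour of color-degree $2$ precisely when \emph{both} of its neighbours have color-degree $1$ (recall that in $C_n$ every color-degree is $1$ or $2$).

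First I would fix notation. Let $\Pi=\{S_1,\dots,S_k\}$ be an NL-coloring of $C_n$ and let $v$ be a vertex of color-degree $1$, with the two neighbours $u$ and $w$. Color-degree $1$ means that $u$ and $w$ receive the same color, say $a$; since $\Pi$ is a proper coloring, $v$ has a color $b\neq a$. Note $u\neq w$ for every $C_n$ with $n\ge 3$. If $n=3$, then $u$ and $w$ are adjacent, which is incompatible with both having color $a$; hence in that case $C_n$ has no vertex of color-degree $1$ and the statement holds vacuously. So I may assume $n\ge 4$.

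Next, suppose for contradiction that neither $u$ nor $w$ has color-degree $2$, i.e.\ both have color-degree $1$. Look at $u$: its neighbours are $v$ (colored $b$) and its other neighbour $u'$, which is distinct from $v$ since $n\ge 3$. As $u$ has color-degree $1$, $u'$ must also be colored $b$, so the set of colors appearing in $N(u)$ is exactly $\{b\}$. The same argument applied to $w$ shows the set of colors appearing in $N(w)$ is exactly $\{b\}$ as well. Then $u$ and $w$ are two distinct vertices of the same color $a$ whose neighbourhoods realise the same color set $\{b\}$, contradicting that $\Pi$ is neighbor-locating. Hence at least one of $u,w$ has color-degree $2$.

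I do not anticipate any genuine obstacle: the whole argument is a short case distinction. The only points needing a touch of care are the degenerate small case $n=3$ and checking that the notion of ``the other neighbour'' of a vertex is well defined (the two neighbours of a vertex in a cycle are distinct), both of which are disposed of in the opening remarks.
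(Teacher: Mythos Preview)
Your proof is correct and follows essentially the same argument as the paper's own proof: assume the central vertex has color-degree $1$, suppose both neighbours also have color-degree $1$, and derive that those two neighbours share both a color and a neighbourhood color set, contradicting the NL property. Your version is slightly more explicit about the $n=3$ edge case and the well-definedness of ``the other neighbour,'' but the core reasoning is identical.
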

\begin{proof} 
Let $x$ be a vertex of color-degree 1. 
In order to derive a contradiction, suppose that its two neighbors $y$ and $z$ have  also color-degree 1.
Then, $y$ and $z$ have the same color, and each one of them  has its two neighbors with the same color, indeed the color of $x$.
A contradiction in any NL-coloring.
\end{proof}

The following operations will be used to obtain NL-colorings of some cycles from NL-colorings of smaller cycles by inserting vertices of degree 2.

\begin{defi}\label{pro.insertarpar}
    {\rm Consider a  coloring (not necessarily neighbor-locating) of a cycle $C_n$.
    Let  $x$ and $y$ be a pair of adjacent vertices colored with $i$ and $j$, with $i\not= j$, respectively.
    The following operations produce a coloring of a cycle of order $n+1$ and $n+2$, respectively.}

    \begin{enumerate}[{\rm (OP1)}]
    	 \item  {\rm If $x$ and $y$ have color-degree 1, insert a new vertex $z$  colored with $h$, $h\not= i,j$,  in the edge $xy$
    	   	(see Figure~\ref{fig.lematecnic}, left).} 
    	\item  
    	{\rm If $x$ and $y$ are vertices of color degree 2, insert two new vertices $x'$ and $y'$ in the edge $xy$, so that $xx'$, $x'y'$ and $y'y$ are edges of the new cycle,
    and  $x'$ and $y'$ are colored with $j$ and $i$, respectively (see Figure~\ref{fig.lematecnic}, right).}  
    \end{enumerate}
    
    \begin{figure}[!hbt]
        \begin{center}
            \includegraphics[width=0.80\textwidth,page=2]{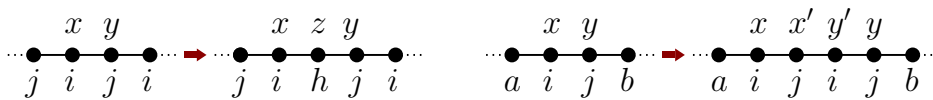}
            \caption{Illustrating Definition~\ref{pro.insertarpar}. Left, $|\{i,j,h\}|=3$  and right, $|\{i,j,a,b\}|=4$.}\label{fig.lematecnic}
        \end{center}
    \end{figure}
      
\end{defi}
Observe that the colors of vertices $x$ and $y$ are preserved with these operations. Operation (OP2) preserves also their color-degree and the set of colors of their neighbors. However, operation (OP1) changes the color-degree of $x$ and $y$ from $1$ to  $2$. Besides, the vertex $z$ added by operation (OP1) has color degree 2, meanwhile the vertices $x'$ and $y'$ added by operation (OP2) have color degree 1.
Notice that the color, color-degree and set of colors of the neighborhood of any other vertex different from $x,y,z$ when applying operation (OP1), and different from  $x,y,x'y'$, when applying operation (OP2), remain unchanged.

\newpage
The following type of NL-colorings will play an important role to construct $NL$-colorings of paths and cycles.
\begin{defi}
An NL-coloring is said to be 1-paired if every vertex of color-degree $1$ has a neighbor of color-degree $1$. 
\end{defi}

\begin{remark}\label{1pairedC9}
	If a $k$-NL-coloring is 1-paired, then every vertex of color-degree 2 has at least one neighbor of color degree 2.
\end{remark}

\begin{remark}\label{1pairedC9}
	The $3$-NL-coloring given in Figure~\ref{colorsmallcycles} for the cycle $C_9$ is 1-paired. 
\end{remark}

\begin{lemma}\label{pro.1pairedcoloring}
    Let $k\ge 4$ be an integer.
    Then,

\begin{enumerate}[\rm (1)]
    
  \item for every $n\in \{\ell (k-1)+1, \cdots, \ell(k)-2,\ell (k)\}$, there is  a 1-paired $k$-NL-coloring of $C_n$.
    
  \item If $n\neq a_2(k)$,
  then there is a 1-paired $k$-NL-coloring of $C_n$ containing (at least) a pair of adjacent vertices of color-degree 1. 
    
  \item If $n= \ell(k)$, then there is a 1-paired $k$-NL-coloring of $C_n$ containing a sequence of 7 consecutive vertices colored  with $1,2,1,2,3,2,3$, respectively.
    
 \end{enumerate}
\end{lemma}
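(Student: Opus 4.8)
The plan is to prove the three statements together by induction on $k$, the conceptual core being the following reformulation. A $k$-NL-coloring of a cycle $C_n$ is exactly a cyclic word $x_1\cdots x_n$ over $\{1,\dots,k\}$ with no two consecutive letters equal and with all \emph{types} pairwise distinct, where the type of the vertex in position $i$ is the pair formed by $x_i$ together with the set $\{x_{i-1},x_{i+1}\}$; there are $a_1(k)$ types of color-degree $1$ and $a_2(k)$ of color-degree $2$, hence $\ell(k)$ in total, and the coloring is $1$-paired precisely when every position $i$ with $x_{i-1}=x_{i+1}$ also satisfies $x_{i-2}=x_i$ or $x_{i+2}=x_i$. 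Enlarging the palette from $\{1,\dots,k-1\}$ to $\{1,\dots,k\}$, the types that become newly available are exactly the $\ell(k)-\ell(k-1)$ \emph{new types} in which the colour $k$ is the colour of the vertex or one of the colours of its neighbours ($2(k-1)$ of color-degree $1$ and $\tfrac{3}{2}(k-1)(k-2)$ of color-degree $2$). Accordingly, I would start from a $1$-paired $(k-1)$-NL-coloring of $C_{\ell(k-1)}$, read it over the palette $\{1,\dots,k\}$ with colour $k$ unused (still $1$-paired, still NL, realizing each old type once), and then insert vertices one or two at a time via operations (OP1) and (OP2) of Definition~\ref{pro.insertarpar} until the desired order is reached, each insertion realizing one or more new types.

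For the base case $k=4$ I would take the $3$-NL-coloring of $C_9$ of Figure~\ref{colorsmallcycles}, which by Remark~\ref{1pairedC9} is $1$-paired and contains the arc $1,2,1,2,3,2,3$; a finite sequence of applications of (OP1) (inserting a vertex coloured $4$ on an isolated pair of color-degree-$1$ vertices) and (OP2) (inserting a color-degree-$1$ pair, either next to a vertex coloured $4$ or re-creating two previously freed types) yields $1$-paired $4$-NL-colorings of $C_n$ for every $n\in\{10,\dots,22\}\cup\{24\}$, with the arc $1,2,1,2,3,2,3$ left intact when $n=\ell(4)=24$ and with an adjacent color-degree-$1$ pair except when $n=a_2(4)=12$. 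The missing value $n=23=\ell(4)-1$ is harmless: $\chi_{_{NL}}(C_{23})\ge 5$ by Proposition~\ref{pro.nolkmenos1}, so one simply jumps from $C_{22}$ to $C_{24}$ by a final (OP2). For $k\ge 5$ I would instead start from the coloring of $C_{\ell(k-1)}$ provided by part~(3) for $k-1$, so that the required arc is available, and repeat the same scheme.

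In the inductive step one checks the two basic moves. An application of (OP1) to an isolated color-degree-$1$ pair, inserting a vertex coloured $k$, adds one vertex, creates three new color-degree-$2$ types simultaneously, frees two color-degree-$1$ types, and preserves the $1$-paired property because the only color-degree-$1$ vertices affected are the two in the pair, which become color-degree $2$. An application of (OP2) to a suitably placed adjacent color-degree-$2$ pair adds two color-degree-$1$ vertices (hence an adjacent color-degree-$1$ pair), preserves the $1$-paired property, and, according to whether colour $k$ occurs in that pair, realizes two new color-degree-$1$ types or two previously freed ones; since each (OP2) leaves behind a fresh isolated color-degree-$1$ pair, a supply of legal sites for later (OP1) moves is maintained. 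Both moves preserve NL as long as the types they introduce are currently unused, which is ensured by the order in which the moves are scheduled. Performing mostly single-vertex (OP1) moves with (OP2) moves interspersed where needed, and never touching the fixed arc (possible since $\ell(k-1)>7$), one obtains a $1$-paired $k$-NL-coloring of $C_n$ for every $n\in\{\ell(k-1)+1,\dots,\ell(k)-2\}$; a last (OP2) then passes from $C_{\ell(k)-2}$ to $C_{\ell(k)}$ with all $\ell(k)$ types realized and the arc intact, while $n=\ell(k)-1$ is skipped because $\chi_{_{NL}}(C_{\ell(k)-1})\ge k+1$ by Proposition~\ref{pro.nolkmenos1}. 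This gives (1) and (3); for (2) one arranges the schedule so that the operation reaching $C_n$ is an (OP2), which can be done for every $n$ in the range except $n=a_2(k)$.

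The hard part is precisely this simultaneous bookkeeping in the inductive step: one must exhibit an explicit ordering of the (OP1)/(OP2) insertions in which every move introduces only types that are at that moment unused (to keep the coloring NL) and is applied at a location compatible with the $1$-paired condition (so that no color-degree-$1$ vertex is ever left without a color-degree-$1$ neighbour), while still hitting every value of $n$ in $\{\ell(k-1)+1,\dots,\ell(k)-2,\ell(k)\}$ — a requirement that becomes rigid near $n=\ell(k)$, where all $\ell(k)$ types must occur exactly once. Producing such a schedule, and in particular verifying that $n=a_2(k)$ is the unique value in the range for which the last operation cannot be forced to be an (OP2) creating an adjacent color-degree-$1$ pair, is where the real work of the proof lies.
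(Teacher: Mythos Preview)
Your inductive framework and your identification of (OP1)/(OP2) as the basic moves are exactly right, but you have made the inductive step harder than it needs to be by interleaving the two operations, and you yourself concede that the resulting scheduling problem is ``where the real work of the proof lies'' without actually carrying it out. As written, then, the proof is incomplete. The paper avoids this difficulty entirely by running the two operations in \emph{separate phases}, which dissolves the bookkeeping.

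Here is the organization that works. By Remark~\ref{pro.verticeslk}, the 1-paired $(k-1)$-NL-coloring of $C_{\ell(k-1)}$ realizes every type over $\{1,\dots,k-1\}$ exactly once; in particular it contains exactly one adjacent color-degree-$1$ pair for each of the $\binom{k-1}{2}$ unordered pairs $\{i,j\}\subseteq\{1,\dots,k-1\}$, and by Lemma~\ref{pro.deg1deg2} these pairs are vertex-disjoint. \emph{Phase~1} applies (OP1) with $h=k$ to these pairs one at a time: the $t$-th step gives a 1-paired $k$-NL-coloring of $C_{\ell(k-1)+t}$, and after all $\binom{k-1}{2}$ steps one reaches $C_{a_2(k)}$ with every vertex of color-degree $2$ and every color-degree-$2$ type over $\{1,\dots,k\}$ realized exactly once. \emph{Phase~2} applies (OP2) once per unordered pair $\{i,j\}\subseteq\{1,\dots,k\}$: starting from $C_{a_2(k)}$ this hits $n=a_2(k),a_2(k)+2,\dots,\ell(k)$; starting instead from $C_{a_2(k)-1}$ (one (OP1) short, one surviving color-degree-$1$ pair on colours $i',j'$) and omitting $\{i',j'\}$ hits the values of the opposite parity. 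No collision check is ever needed, because in each phase the types introduced at a step are indexed by the pair just consumed.

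Part~(2) then falls out with no rescheduling: for $\ell(k-1)<n<a_2(k)$ some original color-degree-$1$ pair has not yet been hit by (OP1), and for $n>a_2(k)$ at least one (OP2) has created one. Part~(3) is handled not by preserving an arc through the induction---your plan, which is incompatible with Phase~1 since that phase destroys every old color-degree-$1$ pair---but by \emph{creating} the arc in $C_{\ell(k)}$: simply begin Phase~2 with the two (OP2) moves on the edges incident to the unique vertex of type $(2,\{1,3\})$ in $C_{a_2(k)}$, turning $\dots 1,2,3\dots$ into $\dots 1,2,1,2,3,2,3\dots$.
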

\begin{proof} 
	Let $k\ge 4$. We begin by proving that the stated result is true if there exists a 1-paired $(k-1)$-NL-coloring of $C_{\ell (k-1)}$ (an example of the procedure described below is shown in Figure~\ref{colorcicle} for $k=4$).
	
	Suppose that $\{S_1,\dots ,S_{k-1}\}$ is a 1-paired $(k-1)$-NL-coloring of $C_{\ell (k-1)}$.
	As a consequence of Remark~\ref{pro.verticeslk}, by the one hand, if $i,j,h$ are different colors from $\{1,\dots , k-1\}$, then there must be a vertex in $S_h$ of color-degree 2 with a neighbor in $S_i$ and the other in $S_j$. On the other hand,
	for each pair of distinct colors $i,j\in \{ 1,\dots ,k-1\}$, there must be a vertex in $S_i$ of color-degree 1 with both neighbors in $S_j$. In this last case, since the coloring is 1-paired, one of these neighbors must have color-degree 1.
Therefore, for each pair of distinct colors $i,j\in \{ 1,\dots ,k-1\}$,
there is a pair of adjacent vertices $x\in S_i$ and $y\in S_j$ of color-degree 1.
By Lemma~\ref{pro.deg1deg2}, these $\binom{k-1}2$ pairs of adjacent vertices of color-degree 1 are pairwise disjoint.


For every one of the $\binom{k-1}2$ pairs of adjacent vertices of color-degree 1 we can insert a new vertex colored with a new color $k$ as described in (OP1). Note that in this way, we add at each step a new vertex of color-degree 2 to $S_k$ (the set of vertices with the new color $k$), and there is a pair of adjacent vertices of color-degree 1 that become vertices of color-degree 2. Besides, by construction, we have a $k$-NL-coloring at each step. Therefore, after $\binom{k-1}2$ steps we obtain a $k$-NL-coloring of the cycle of order $\ell(k-1)+\binom{k-1}2=a_2(k)$ such that there are no vertices of color-degree 1.
Moreover, at every intermediate step we have a 1-paired $k$-NL-coloring of the corresponding cycle.
In particular, we obtain 1-paired $k$-NL-colorings of $C_{a_2(k)-1}$ and of $C_{a_2(k)}$ such that for each unordered pair $\{ i,j \} \subseteq \{1,\dots ,k\}$ there exists a pair of adjacent vertices $x\in S_i$ and $y\in S_j$ of color-degree 2. Besides, the 1-paired NL-coloring of $C_{a_2(k)}$ has no vertices of color degree 1, while the 1-paired NL-coloring of $C_{a_2(k)-1}$ has exactly one pair of adjacent vertices of color-degree 1.

Now, starting with the $k$-NL-coloring obtained for $C_{a_2(k)}$, choose an edge with endpoints of  color-degree 2 in $S_i$ and $S_j$, respectively, for every pair $i,j$ of distinct colors of $\{1,\dots ,k\}$. 
By successively applying (OP2) to the $\binom k2$ edges chosen in this way, it is possible to add up to $\binom {k} 2$ pairs of adjacent vertices of color-degree 1 giving rise to a 1-paired coloring of $C_{n}$, whenever $n$ has the same parity as $a_2(k)$, and $a_2(k)\le n\le a_2(k)+2\binom{k}2=\ell (k)$.

We can proceed in a similar way starting with the 1-paired $k$-NL-coloring of $C_{a_2(k)-1}$. 
The difference with respect to the preceding case is that now we already have a pair of vertices of color-degree 1 that we may assume are colored with $i'$ and $j'$, respectively, with their neighbors in $S_{j'}$ and in $S_{i'}$, respectively. Hence, in order to have an NL-coloring at each step, we don't choose any edge with endpoints of color-degree 2 and colored with $i'$ and $j'$. By successively applying (OP2) to the $\binom k2-1$ chosen edges, it is possible to add up to $\binom {k} 2-1$ pairs of adjacent vertices of color-degree 1 obtaining a 1-paired coloring of $C_{n}$, whenever $n$ has the same parity as $a_2(k)-1$, and $a_2(k)-1\le n\le a_2(k)-1+2\left( \binom{k}2 -1\right)=\ell (k)-2$.

This procedure gives a 1-paired $k$-NL-coloring of $C_n$, whenever $\ell(k-1)<n\le \ell (k)$ and $n\not= \ell (k)-1$.
Moreover, by construction, the obtained 1-paired $k$-NL-coloring of $C_n$ has at least a pair of adjacent vertices of color-degree 1, except for the case $n=a_2(k)$, that has no vertex of color-degree 1.
The sequence of colors $1,2,1,2,3,2,3$ can be obtained in the following way.
Consider the edges incident to a vertex $u\in S_2$ and with neighbors colored with 1 and 3 in $C_{a_2(k)}$ (we know that it exists) and begin applying (OP2) to the edges incident to $u$.

Now we proceed to prove the stated result by induction. 
For $k=4$, we have $\ell (3)=9$ and a 1-paired $3$-NL-coloring of $C_9$ is given in Figure~\ref{colorcicle}. Hence, using the procedure described above, we have that the stated result is true for $k=4$.  
Now let $k>4$. By induction hypothesis, the stated result is true for $k-1$,
that is, there exists a 1-paired $(k-1)$-NL-coloring of $C_{\ell (k-1)}$ and we can proceed as described above to demonstrate the result for $k$. 
  \end{proof}
    
    \vspace{-.2cm}
    \begin{figure}[!hbt]
        \begin{center}
            \includegraphics[width=0.74\textwidth,page=3]{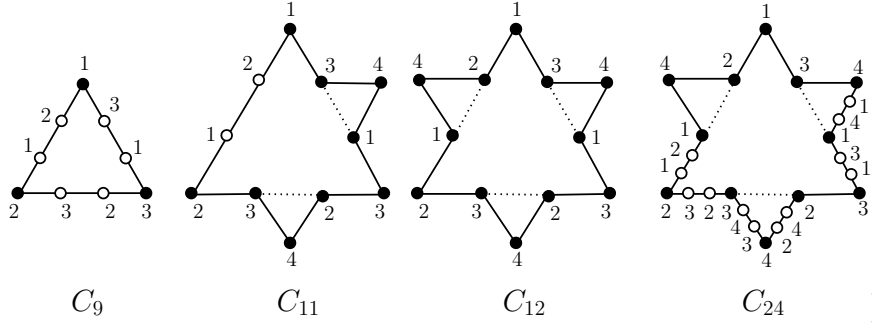}]
            \caption{Obtaining a 4-NL-coloring from a 1-paired $3$-NL-coloring of the cycle $C_9$. 
            	In white, the vertices of color-degree 1. 
            	Recall that $\ell(3)=9$, $\ell(4)=24$, $a_2(4)-1=11$ and $a_2(4)=12$. 
            	Inserting vertices of color-degree 2 in some edges of $C_{9}$, we achieve 4-NL-colorings of $C_{n}$, whenever $n\in \{10,11,12\}$. 
            	Inserting pairs of white vertices in some edges of $C_{11}$ and of $C_{12}$ we achieve 4-NL-colorings of $C_n$, whenever $n\in \{13,\dots ,24\}\setminus \{23\}$.}\label{colorcicle}
        \end{center}
    \end{figure}

\vspace{-.7cm}
\begin{lemma} \label{xnlpn}
Let $k\ge 4$ be an integer. 
If there is a 1-paired $k$-NL-coloring of $C_{n}$, then there is a  $k$-NL-coloring of $P_n$.
\end{lemma}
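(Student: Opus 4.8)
The plan is to produce the required $k$-NL-coloring of $P_n$ from the given one by deleting a single, carefully chosen edge of $C_n$ while keeping every vertex's color. Deleting an edge cannot spoil properness and alters the neighborhood of only its two endpoints, so everything reduces to controlling how the color-neighborhoods of those two endpoints change. I would split into two cases according to whether the given 1-paired $k$-NL-coloring $\{S_1,\dots,S_k\}$ of $C_n$ has a vertex of color-degree $1$.

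Suppose it does, and let $x\in S_i$ be such a vertex. By the definition of a 1-paired coloring, $x$ has a neighbor $y$ of color-degree $1$. Since $x$ has color-degree $1$ and exactly two neighbors in $C_n$ --- namely $y$ and one further vertex --- both of them carry the same color; likewise both neighbors of $y$ carry the same color, namely $i$. Hence deleting the edge $xy$ leaves the color-set of the neighborhood of $x$ unchanged, because the surviving neighbor of $x$ shares its color with $y$, and similarly for $y$; no other vertex has its neighborhood altered. Thus $P_n=C_n-xy$ carries exactly the same color-neighborhoods as $C_n$ and is still a $k$-NL-coloring. This case presents no real difficulty.

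Suppose instead that every vertex has color-degree $2$. Delete an arbitrary edge $uv$, say with $u\in S_i$, $v\in S_j$ and $i\ne j$. In $P_n=C_n-uv$ the vertices $u$ and $v$ become leaves, so each acquires color-degree $1$ with a one-element color-neighborhood, while every other vertex is unaffected. Since originally every vertex of $S_i$ had color-degree $2$, all vertices of $S_i$ other than $u$ keep a two-element color-neighborhood and cannot coincide with $u$; the same applies to $v$ within $S_j$, and $u,v$ lie in distinct colors. Hence no two equally colored vertices of $P_n$ share a color-neighborhood, so this is a $k$-NL-coloring of $P_n$. The two cases are exhaustive by Lemma~\ref{pro.deg1deg2}: a coloring containing a color-degree-$1$ vertex cannot consist only of color-degree-$1$ vertices, since such a vertex must have a color-degree-$2$ neighbor. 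The only step needing care is the claim in the first case that the two neighbors of $x$ (resp. of $y$) are identically colored, which is exactly what color-degree $1$ delivers; I do not anticipate any genuine obstacle beyond this bookkeeping.
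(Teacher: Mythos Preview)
Your proof is correct and follows the same two-case strategy as the paper: when there is a pair of adjacent color-degree-$1$ vertices, delete the edge between them (their color-neighborhoods do not change); when every vertex has color-degree $2$, delete any edge (the two new leaves acquire one-element color-neighborhoods, while all other vertices of their colors retain two-element ones). The paper's version is terser because it appeals to the specific colorings built in Lemma~\ref{pro.1pairedcoloring}, which fall into exactly these two cases according to whether $n=a_2(k)$; your argument works directly from the 1-paired hypothesis and so matches the lemma's stated generality more faithfully. One minor point: your closing invocation of Lemma~\ref{pro.deg1deg2} is unnecessary and slightly off-target---the cases ``some vertex has color-degree $1$'' and ``every vertex has color-degree $2$'' are trivially complementary since $\Delta(C_n)=2$.
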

\begin{proof}
    Consider one of the 1-paired $k$-NL-colorings of $C_{n}$ described to prove Lemma~\ref{pro.1pairedcoloring}.
    For $n\not= a_2(k)$, it is enough to remove from the cycle any edge joining two adjacent vertices of color-degree 1.
    For $n = a_2(k)$, the removal of any edge $xy$ gives rise to only two vertices, $x$ and $y$, of color-degree 1, and
    the set of colors of the neighborhood of any other vertex is not modified. 
    Hence, in any case, we have a $k$-NL-coloring of the path $P_n$.
    \end{proof}

\newpage
\begin{lemma} \label{xnlcaminoellk-1} 
For every integer $k\ge 4$, there is a $k$-NL-coloring of the path $P_{\ell(k)-1}$.
\end{lemma}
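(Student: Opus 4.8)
The plan is to obtain the desired coloring of $P_{\ell(k)-1}$ by deleting a single, carefully chosen vertex from a cycle $C_{\ell(k)}$ carrying the structured coloring supplied by Lemma~\ref{pro.1pairedcoloring}(3). Since $k\ge 4$, that lemma gives a (1-paired) $k$-NL-coloring $\{S_1,\dots,S_k\}$ of $C_{\ell(k)}$ together with seven consecutive vertices $v_1,\dots,v_7$ colored $1,2,1,2,3,2,3$, respectively. The vertex I will remove is the central one, $v_4$: it lies in $S_2$, and the crucial point is that each of its two cycle-neighbours, $v_3\in S_1$ and $v_5\in S_3$, has its \emph{other} neighbour ($v_2$, resp.\ $v_6$) also in $S_2$.

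I then delete $v_4$ together with its two incident edges, keeping all remaining colors. What is left is a path on $\ell(k)-1$ vertices, namely $P_{\ell(k)-1}$, with $v_3$ and $v_5$ as its endpoints, and I claim the inherited partition is a $k$-NL-coloring. Each color class is a subset of an independent set of $C_{\ell(k)}$, hence independent. Every vertex other than $v_3$ and $v_5$ has exactly the same neighbourhood in $P_{\ell(k)-1}$ as in $C_{\ell(k)}$, so the set of colors of its neighbours is unchanged. For $v_3$: in the cycle its neighbours $v_2$ and $v_4$ both lay in $S_2$, so its neighbour-color set was $\{2\}$; after the deletion its only neighbour is $v_2\in S_2$, so its neighbour-color set is still $\{2\}$. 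The same computation gives neighbour-color set $\{2\}$ for $v_5$, before and after. Hence no vertex of $P_{\ell(k)-1}$ changes the set of colors seen in its neighbourhood, and the only change to the partition is that $v_4$ has left $S_2$. Since distinct vertices of a common color had distinct neighbour-color sets in the NL-coloring of $C_{\ell(k)}$, the same holds in $P_{\ell(k)-1}$; moreover all $k$ colors still occur (e.g.\ $v_2,v_6\in S_2$, and the other classes are untouched), so the inherited coloring is a $k$-NL-coloring of $P_{\ell(k)-1}$.

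No genuinely hard step remains here: the real work is hidden in Lemma~\ref{pro.1pairedcoloring}(3), which manufactures the $1,2,1,2,3,2,3$ window. The only pitfall to keep in mind is that removing a vertex of a cycle can drop the color-degree of each of its neighbours from $2$ to $1$, thereby altering their neighbour-color sets and possibly creating a repetition within a color class; choosing $v_4$ — whose neighbours $v_3$ and $v_5$ see only the color $2$ on the side away from $v_4$ — is exactly what makes this effect vanish, reducing the verification to the one-line observation above. (Combined with Corollary~\ref{pro.nn} and the inequality $\ell(k)-1>\ell(k-1)$ from the proof of Proposition~\ref{pro.nolkmenos1}, this in fact pins down $\chi_{_{NL}}(P_{\ell(k)-1})=k$, in contrast with the cycle case of Proposition~\ref{pro.nolkmenos1}.)
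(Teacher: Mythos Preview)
Your proof is correct and follows essentially the same approach as the paper: both start from the $k$-NL-coloring of $C_{\ell(k)}$ with the $1,2,1,2,3,2,3$ window provided by Lemma~\ref{pro.1pairedcoloring}(3) and delete the middle vertex (the one colored $2$ whose neighbours are colored $1$ and $3$). You supply more explicit verification that the neighbour-color sets of the new endpoints are unchanged, but the idea is identical.
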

\begin{proof}
    Consider the $k$-NL-coloring of the cycle $C_{\ell(k)}$ containing the sequence of vertices colored with $1,2,1,2,3,2,3$
described in Lemma~\ref{pro.1pairedcoloring}.
If we remove  from the preceding sequence the vertex colored with $2$ whose neighbors have colors 1 and 3, respectively, then we obtain a $k$-NL-coloring of $P_{\ell(k)-1}$.
\end{proof}

\begin{lemma}\label{xnlcicloellk-1} 
For every integer $k\ge 4$, there is a $(k+1)$-NL-coloring of the cycle $C_{\ell(k)-1}$.
\end{lemma}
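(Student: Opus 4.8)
The plan is to build the desired $(k+1)$-NL-coloring of $C_{\ell(k)-1}$ from a $k$-NL-coloring of the slightly smaller cycle $C_{\ell(k)-2}$ by inserting a single vertex carrying a brand-new color. The reason for starting one vertex below is that $\ell(k)-2$ (unlike $\ell(k)-1$) is an admissible order for a $k$-NL-coloring produced by Lemma~\ref{pro.1pairedcoloring}: it lies in the range $\{\ell(k-1)+1,\dots,\ell(k)-2,\ell(k)\}$ because $\ell(k)-\ell(k-1)=\frac{(3k-2)(k-1)}{2}\ge 3$ for $k\ge 4$, and moreover $\ell(k)-a_2(k)=k(k-1)\ge 12$, so $\ell(k)-2\neq a_2(k)$. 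Hence part~(2) of Lemma~\ref{pro.1pairedcoloring} supplies a $k$-NL-coloring $\{S_1,\dots,S_k\}$ of $C_{\ell(k)-2}$ together with a pair of adjacent vertices $x\in S_i$, $y\in S_j$ with $i\neq j$, both of color-degree~$1$.

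Next I would apply operation~(OP1) of Definition~\ref{pro.insertarpar} to this pair: insert into the edge $xy$ a new vertex $z$ and give it the new color $k+1$ (legitimate, since $k+1\notin\{i,j\}$). This produces a proper $(k+1)$-coloring $\{S_1,\dots,S_k,\{z\}\}$ of a cycle of order $(\ell(k)-2)+1=\ell(k)-1$, which is exactly the order we want.

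It then remains to check that this coloring is neighbor-locating, and here I would rely on the observations recorded immediately after Definition~\ref{pro.insertarpar}: (OP1) leaves the color, the color-degree, and the set of colors of the neighborhood of every vertex other than $x$, $y$, $z$ unchanged. The vertex $z$ is the unique vertex of color $k+1$, so it trivially causes no conflict. After the insertion, both $x$ and $y$ acquire a neighbor (namely $z$) of the previously unused color $k+1$; since the neighborhood-color-sets of all vertices other than $x$ and $y$ are as before and none of them contained $k+1$, the vertex $x$ is the unique vertex of $S_i$ whose neighborhood-color-set contains $k+1$, and likewise $y$ is the unique such vertex of $S_j$. Therefore within every color class the neighborhood-color-sets remain pairwise distinct, so $\{S_1,\dots,S_k,\{z\}\}$ is a $(k+1)$-NL-coloring of $C_{\ell(k)-1}$.

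I do not expect a serious obstacle here: the two arithmetic inequalities needed to invoke Lemma~\ref{pro.1pairedcoloring}(2) are routine, and the NL-verification is immediate once one notices that the fresh color $k+1$ \emph{shields} the only two vertices whose neighborhoods were altered. The single point requiring care is to perform the insertion at a pair of \emph{adjacent color-degree-$1$} vertices so that (OP1) legitimately applies, which is precisely why part~(2) of Lemma~\ref{pro.1pairedcoloring} — rather than merely part~(1) — is the statement being used.
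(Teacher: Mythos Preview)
Your proof is correct and follows essentially the same approach as the paper: take a $k$-NL-coloring of $C_{\ell(k)-2}$ (which exists by Lemma~\ref{pro.1pairedcoloring}) and insert one new vertex carrying the fresh color $k{+}1$ into an edge. The only difference is that the paper inserts $z$ into an \emph{arbitrary} edge $xy$---since $z$ is the sole vertex of color $k{+}1$, the neighborhood-color-sets of $x$ and $y$ automatically become unique in their classes regardless of their original color-degrees---so your appeal to part~(2) of Lemma~\ref{pro.1pairedcoloring} and to (OP1) (with its color-degree-$1$ hypothesis) is more than is actually needed, though certainly not wrong.
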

\begin{proof}
    Consider a $k$-NL-coloring of the cycle $C_{\ell(k)-2}$. Let $x$ and $y$ be adjacent vertices. 
    Remove the edge $xy$ and add a new vertex $z$  adjacent to $x$ and $y$. 
    If $z$ is colored with a new color $k+1$, then we have a $(k+1)$-NL-coloring of $C_{\ell(k)-1}$.
\end{proof}

As a consequence of Corollary \ref{pro.nn} and Lemmas \ref{pro.1pairedcoloring}, \ref{xnlpn}, \ref{xnlcaminoellk-1} and  \ref{xnlcicloellk-1}, we can determine the neighbor-locating chromatic number of graphs and cycles of order at least $4$. 
Notice that the given proofs of these lemmas are constructive. 
Hence, it is possible to produce NL-colorings of minimum cardinality for all paths and cycles.

\begin{theorem}\label{thm.chi_CiclosCaminos} 
Let $k,n$  be integers such that $k\ge 4$ and $\ell (k-1) <n\le \ell (k)$. 
Then,

\begin{enumerate}[\rm (1)]
    \item $\chi_{_{NL}}(P_{n})=k$.

    \item $\chi_{_{NL}}(C_{n})=k$, if  $n\not= \ell (k)-1$.

    \item $\chi_{_{NL}}(C_{n})=k+1$, if  $n=\ell (k)-1$.
\end{enumerate}  
  
\end{theorem}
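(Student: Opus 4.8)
The plan is to assemble Theorem~\ref{thm.chi_CiclosCaminos} directly from the lemmas already proved, splitting into the path case and the cycle case, and in the cycle case further into $n\neq\ell(k)-1$ and $n=\ell(k)-1$. Throughout we fix integers $k\ge4$ and $n$ with $\ell(k-1)<n\le\ell(k)$, and we use freely that $\ell$ is strictly increasing (as noted after Corollary~\ref{pro.nn}).

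\textbf{Lower bounds.} First I would dispose of the lower bounds, which are essentially uniform. Since $n>\ell(k-1)$, Corollary~\ref{pro.nn} immediately gives $\chi_{_{NL}}(P_n)\ge k$ and $\chi_{_{NL}}(C_n)\ge k$; this settles the lower bound in (1) and (2). For (3), when $n=\ell(k)-1$, the sharper Proposition~\ref{pro.nolkmenos1} gives $\chi_{_{NL}}(C_{\ell(k)-1})\ge k+1$. (I would remark that $\ell(k)-1$ indeed lies in the admissible range $\ell(k-1)<n\le\ell(k)$, which is exactly the strict inequality $\ell(k)-1>\ell(k-1)$ checked inside the proof of Proposition~\ref{pro.nolkmenos1}, so there is no gap.)

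\textbf{Upper bounds.} For the matching upper bounds I would case on whether $n$ avoids the two exceptional values $\ell(k)-1$ and, for paths, whether $n$ needs a separate construction. For cycles with $n\notin\{\ell(k)-1\}$, i.e. $\ell(k-1)<n\le\ell(k)-2$ or $n=\ell(k)$: Lemma~\ref{pro.1pairedcoloring}(1) produces a (1-paired) $k$-NL-coloring of $C_n$ for every such $n$, giving $\chi_{_{NL}}(C_n)\le k$ and hence equality in (2). For cycles with $n=\ell(k)-1$: Lemma~\ref{xnlcicloellk-1} produces a $(k+1)$-NL-coloring of $C_{\ell(k)-1}$, giving $\chi_{_{NL}}(C_{\ell(k)-1})\le k+1$ and hence equality in (3). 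For paths: when $n\neq\ell(k)-1$, Lemma~\ref{pro.1pairedcoloring}(1) gives a 1-paired $k$-NL-coloring of $C_n$, and then Lemma~\ref{xnlpn} converts it into a $k$-NL-coloring of $P_n$; when $n=\ell(k)-1$, we cannot route through $C_n$, so instead Lemma~\ref{xnlcaminoellk-1} supplies a $k$-NL-coloring of $P_{\ell(k)-1}$ directly. In all cases $\chi_{_{NL}}(P_n)\le k$, which with the lower bound yields equality in (1).

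\textbf{Main obstacle.} The genuine content is already packed into Lemma~\ref{pro.1pairedcoloring} (the inductive "1-paired" construction via operations (OP1)/(OP2)) and Proposition~\ref{pro.nolkmenos1} (the counting argument ruling out a $k$-coloring of $C_{\ell(k)-1}$); the theorem itself is only a bookkeeping of which lemma covers which residue/parity class of $n$. So the only thing to be careful about in writing the proof is the case analysis: verifying that the union of the ranges $\{\ell(k-1)+1,\dots,\ell(k)-2\}\cup\{\ell(k)\}$ from Lemma~\ref{pro.1pairedcoloring}(1) together with the singleton $n=\ell(k)-1$ exhausts $\ell(k-1)<n\le\ell(k)$, and that the path at $n=\ell(k)-1$ is handled by Lemma~\ref{xnlcaminoellk-1} rather than by the cycle route. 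There is no new estimate to grind through — the bound $\ell(k-1)<\ell(k)-1$ needed to place the exceptional value inside the range is the one already checked in Proposition~\ref{pro.nolkmenos1}.
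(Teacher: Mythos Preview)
Your proposal is correct and follows the same approach as the paper, which simply states the theorem as a consequence of Corollary~\ref{pro.nn}, Proposition~\ref{pro.nolkmenos1}, and Lemmas~\ref{pro.1pairedcoloring}, \ref{xnlpn}, \ref{xnlcaminoellk-1}, \ref{xnlcicloellk-1}. Your case analysis is exactly the intended bookkeeping, and you have correctly identified that Proposition~\ref{pro.nolkmenos1} (omitted from the paper's lead-in sentence) is what supplies the lower bound in item~(3).
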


\section{Fans and Wheels}\label{fa.and.wh}

The graphs that are obtained by adding a new vertex adjacent to every vertex of either the path or the cycle of order $n-1$
are the \emph{fan} and the \emph{wheel} of order $n$, that are denoted by $F_n$ and $W_n$, respectively.
The preceding theorem allows us to determine the NLC-number of these graphs.

\begin{lemma}\label{univvertex} 
	If  $G'$ is the graph obtained from a graph $G$ by adding a new vertex adjacent to every vertex of $G$, then
	 $\chi_{_{NL}} (G')=\chi_{_{NL}} (G)+1$.
\end{lemma}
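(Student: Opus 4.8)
The plan is to show two inequalities: $\chi_{_{NL}}(G') \le \chi_{_{NL}}(G)+1$ and $\chi_{_{NL}}(G') \ge \chi_{_{NL}}(G)+1$. Write $u$ for the new universal vertex of $G'$, so $N_{G'}(u) = V(G)$ and, for each $v \in V(G)$, $N_{G'}(v) = N_G(v) \cup \{u\}$.

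For the upper bound, start from an optimal NL-coloring $\Pi = \{S_1, \dots, S_k\}$ of $G$ with $k = \chi_{_{NL}}(G)$, and define a coloring of $G'$ by keeping the classes $S_1, \dots, S_k$ and adding a new singleton class $S_{k+1} = \{u\}$. First I would check this is a proper coloring: each $S_i$ ($i \le k$) is still independent in $G'$ since $u \notin S_i$, and $\{u\}$ is trivially independent. Then I would verify the neighbor-locating property. Vertex $u$ is the only vertex of color $k+1$, so it is automatically distinguished. For two distinct $v,w$ in the same class $S_i$ with $i \le k$, the color set of $N_{G'}(v)$ is the color set of $N_G(v)$ together with $\{k+1\}$ (contributed by $u$), and likewise for $w$; since these differed already in $\Pi$ (as $\Pi$ is neighbor-locating on $G$), adding the common extra color $k+1$ to both keeps them different. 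Hence this is an NL-coloring of $G'$ with $k+1$ colors.

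For the lower bound, take an optimal NL-coloring $\Pi' = \{T_1, \dots, T_m\}$ of $G'$ with $m = \chi_{_{NL}}(G')$. The key observation is that the universal vertex $u$ must be the unique vertex of its color class: if some $v \ne u$ shared $u$'s color, then $v$ and $u$ would need to be distinguished by the colors of their neighbors, but $N_{G'}(u) = V(G)$ meets every color class (indeed $V(G)$ contains vertices of all $m$ colors unless some class is exactly $\{u\}$), so $u$ has the maximum possible color-degree $m-1$; meanwhile $v \ne u$ cannot have a neighbor of its own color (it would be adjacent to $u$, which has a different color — wait, $v$ and $u$ share a color here, so this needs care). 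More cleanly: $u$ is adjacent to every other vertex, so $N_{G'}(u)$ contains a vertex of every color that is used by some vertex other than $u$; if $v \ne u$ also has $u$'s color, then $v \in N_{G'}(u)$, contradicting that $v$'s color is not represented in... — I would instead argue directly that the color class of $u$ is independent and contains $u$, which is adjacent to all of $V(G)$, hence that class is $\{u\}$ exactly. So, deleting the class $\{u\}$ from $\Pi'$ leaves a partition of $V(G)$ into $m-1$ classes, each independent in $G$ (as a subset of an independent set of $G'$). I would then check it is neighbor-locating on $G$: for $v, w \in V(G)$ in the same remaining class, the color sets of $N_{G'}(v)$ and $N_{G'}(w)$ differed in $\Pi'$, and each of these color sets is obtained from the corresponding color set in $G$ by adding the single color of $u$; since that color is removed from both, the resulting color sets in $G$ still differ. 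Thus $\chi_{_{NL}}(G) \le m-1 = \chi_{_{NL}}(G')-1$.

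The only genuinely delicate point is the claim that $u$ is the sole vertex of its color in any NL-coloring of $G'$; this follows simply because $u$'s color class is an independent set containing the vertex $u$ that is adjacent to all other vertices of $G'$, forcing the class to be $\{u\}$. Everything else is the routine bookkeeping of tracking how color-degree sets change when one common color ($k+1$, resp. the color of $u$) is added to or removed from every vertex's neighborhood.
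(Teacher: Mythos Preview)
Your argument is correct and follows essentially the same route as the paper: extend an optimal NL-coloring of $G$ by a fresh singleton class $\{u\}$ for the upper bound, and for the lower bound observe that $u$ is alone in its color class (since that class is independent and $u$ is adjacent to everything else), then restrict to $G$. The paper's proof is terser---it simply states that the restriction of $\Pi'$ to $V(G)$ is a $(k'-1)$-NL-coloring without spelling out the bookkeeping you do about adding/removing the common color $c_u$---but the idea is identical.
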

\begin{proof}
Let	$\chi_{_{NL}} (G)=k$, $\chi_{_{NL}} (G')=k'$  and let $u$ be the vertex of $G'$ adjacent to every vertex of $G$.
Obviously, $k'\le k+1$, because a $(k+1)$-NL-coloring of $G'$ can be obtained from a 
$k$-NL-coloring of $G$ by assigning a new color to vertex $u$.
On the other hand, if we have a $k'$-NL-coloring $\Pi'$ of $G'$, then the color assigned to $u$ must be different from the color assigned to any vertex of $G$. Moreover, since every vertex of $G$ is adjacent to $u$, the coloring $\Pi'$ restricted to the vertices of $G$ is a $(k'-1)$-NL-coloring of $G$, implying that $k\le k'-1$.
Hence, $k'=k+1$.
\end{proof}

\begin{theorem}\label{t:fan_wheel_small}
The NLC-number of fans and wheels of order $n$, $4\le n\le 10$, is:
	
	\begin{enumerate}[\rm (1)]
		\item  $\chi_{_{NL}}(F_{n})=4$, if $4\le n\le 10$.
		\item  $\chi_{_{NL}}(W_{n})=4$, if $n\in \{ 4,6,8,10\}$.
		\item   $\chi_{_{NL}}(W_{n})=5$, if $n\in \{ 5,7,9 \}$
	\end{enumerate}	
\end{theorem}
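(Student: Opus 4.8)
The plan is to derive Theorem~\ref{t:fan_wheel_small} as an almost immediate consequence of Lemma~\ref{univvertex} together with the small-order data for paths and cycles already recorded in Theorem~\ref{pro.base}. Recall that $F_n$ is obtained from $P_{n-1}$ by adding a universal vertex, and $W_n$ is obtained from $C_{n-1}$ by adding a universal vertex. So Lemma~\ref{univvertex} gives at once
$$\chi_{_{NL}}(F_n)=\chi_{_{NL}}(P_{n-1})+1,\qquad \chi_{_{NL}}(W_n)=\chi_{_{NL}}(C_{n-1})+1.$$
Thus the three items reduce to reading off the relevant values from Theorem~\ref{pro.base}.

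For item (1): when $4\le n\le 10$ we have $3\le n-1\le 9$, so $\chi_{_{NL}}(P_{n-1})=3$ by Theorem~\ref{pro.base}(2), and hence $\chi_{_{NL}}(F_n)=4$. For item (2): when $n\in\{4,6,8,10\}$ we have $n-1\in\{3,5,7,9\}$, so $\chi_{_{NL}}(C_{n-1})=3$ by Theorem~\ref{pro.base}(3), giving $\chi_{_{NL}}(W_n)=4$. For item (3): when $n\in\{5,7,9\}$ we have $n-1\in\{4,6,8\}$, so $\chi_{_{NL}}(C_{n-1})=4$ by Theorem~\ref{pro.base}(4), giving $\chi_{_{NL}}(W_n)=5$. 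That is the entire argument.

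I do not anticipate a genuine obstacle here; the only thing to be careful about is the indexing/edge cases, namely that $n-1$ stays within the ranges covered by Theorem~\ref{pro.base} (in particular that $n\ge 4$ is exactly what forces $n-1\ge 3$, so that $P_{n-1}$ and $C_{n-1}$ are non-trivial and the quoted values apply, and that the cycle $C_{n-1}$ makes sense, i.e.\ $n-1\ge 3$). One should also note that Lemma~\ref{univvertex} is stated for an arbitrary graph $G$, so it applies verbatim with $G=P_{n-1}$ and $G=C_{n-1}$ without any extra hypothesis. No further case analysis or construction is needed, since the constructions underlying Theorem~\ref{pro.base} already yield explicit optimal colorings, which can be extended to $F_n$ and $W_n$ by assigning the new color to the universal vertex.
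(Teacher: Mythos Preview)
Your proof is correct and follows exactly the paper's approach: the paper's own proof is the single sentence ``It is a direct consequence of Lemma~\ref{univvertex} and Theorem~\ref{pro.base},'' and you have simply unpacked that sentence case by case.
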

\begin{proof} 
	It is a direct consequence of Lemma~\ref{univvertex} and Theorem  
	\ref{pro.base}.
\end{proof}

\newpage
\begin{theorem}\label{t:fan_wheel}
Let $k,n$  be integers such that $k\ge 4$ and  $\ell (k-1)+1 <n\le \ell (k)+1$. 
Then,

\begin{enumerate}[\rm (1)]
 \item $\chi_{_{NL}} (F_n)= k+1$.

 \item $\chi_{_{NL}} (W_n)= k+1$, if $n\neq \ell (k)$.

 \item $\chi_{_{NL}} (W_n)= k+2$, if $n= \ell (k)$.
 \end{enumerate}
 
\end{theorem}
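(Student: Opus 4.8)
The plan is to derive Theorem~\ref{t:fan_wheel} as an essentially immediate corollary of Theorem~\ref{thm.chi_CiclosCaminos} (the NLC-numbers of paths and cycles) together with Lemma~\ref{univvertex} (adding a universal vertex raises the NLC-number by exactly one). First I would record the structural observation that, by definition, $F_n$ is obtained from the path $P_{n-1}$ by adding a vertex adjacent to all of its vertices, and $W_n$ is obtained from the cycle $C_{n-1}$ by the same operation. Hence $\chi_{_{NL}}(F_n)=\chi_{_{NL}}(P_{n-1})+1$ and $\chi_{_{NL}}(W_n)=\chi_{_{NL}}(C_{n-1})+1$ by Lemma~\ref{univvertex}; this holds for $n\ge 4$ so that the path/cycle in question has order at least $3$ and the earlier theorems apply.

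Next I would translate the hypothesis $\ell(k-1)+1<n\le \ell(k)+1$ on $n$ into the hypothesis $\ell(k-1)<n-1\le \ell(k)$ on $n-1$, which is exactly the range covered by Theorem~\ref{thm.chi_CiclosCaminos}. For the fan, part~(1) of that theorem gives $\chi_{_{NL}}(P_{n-1})=k$ throughout this range, so $\chi_{_{NL}}(F_n)=k+1$, proving item~(1). For the wheel I would split into two cases according to whether $n-1$ equals the exceptional value $\ell(k)-1$: if $n\ne \ell(k)$ then $n-1\ne \ell(k)-1$, so part~(2) of Theorem~\ref{thm.chi_CiclosCaminos} gives $\chi_{_{NL}}(C_{n-1})=k$ and hence $\chi_{_{NL}}(W_n)=k+1$, which is item~(2); if $n=\ell(k)$ then $n-1=\ell(k)-1$ and part~(3) gives $\chi_{_{NL}}(C_{n-1})=k+1$, whence $\chi_{_{NL}}(W_n)=k+2$, which is item~(3).

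There is essentially no obstacle here beyond bookkeeping: the one point that warrants a sentence of care is checking the boundary of the range. When $n=\ell(k-1)+2$ (the smallest admissible value), we have $n-1=\ell(k-1)+1$, which indeed satisfies $\ell(k-1)<n-1$, so Theorem~\ref{thm.chi_CiclosCaminos} applies; and when $n=\ell(k)+1$ we get $n-1=\ell(k)$, again in range, and $\ell(k)\ne\ell(k)-1$, so the wheel falls under item~(2) as claimed. I would also note in passing that for small orders not covered by the $k\ge4$ hypothesis (namely $4\le n\le 10$) the corresponding statement is Theorem~\ref{t:fan_wheel_small}, obtained the same way from Theorem~\ref{pro.base}, so the two theorems together cover all fans and wheels. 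Since the proofs of the path and cycle results were constructive, this argument also yields an explicit optimal NL-coloring of any fan or wheel: take the constructed optimal coloring of $P_{n-1}$ or $C_{n-1}$ and assign the hub a fresh color.

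\begin{proof}
By definition, the fan $F_n$ is obtained from the path $P_{n-1}$ by adding a vertex adjacent to all its vertices, and the wheel $W_n$ is obtained from the cycle $C_{n-1}$ in the same way. Hence, by Lemma~\ref{univvertex},
$$\chi_{_{NL}}(F_n)=\chi_{_{NL}}(P_{n-1})+1 \quad\text{and}\quad \chi_{_{NL}}(W_n)=\chi_{_{NL}}(C_{n-1})+1.$$
The hypothesis $\ell(k-1)+1<n\le\ell(k)+1$ is equivalent to $\ell(k-1)<n-1\le\ell(k)$, which is precisely the range considered in Theorem~\ref{thm.chi_CiclosCaminos}.

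For item~(1), part~(1) of Theorem~\ref{thm.chi_CiclosCaminos} gives $\chi_{_{NL}}(P_{n-1})=k$, so $\chi_{_{NL}}(F_n)=k+1$.

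For items~(2) and~(3), observe that $n\ne\ell(k)$ if and only if $n-1\ne\ell(k)-1$. If $n\ne\ell(k)$, then by part~(2) of Theorem~\ref{thm.chi_CiclosCaminos} we have $\chi_{_{NL}}(C_{n-1})=k$, hence $\chi_{_{NL}}(W_n)=k+1$. If $n=\ell(k)$, then $n-1=\ell(k)-1$ and by part~(3) of Theorem~\ref{thm.chi_CiclosCaminos} we have $\chi_{_{NL}}(C_{n-1})=k+1$, hence $\chi_{_{NL}}(W_n)=k+2$.
\end{proof}
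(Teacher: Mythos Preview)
Your proof is correct and follows exactly the same approach as the paper, which simply states that the result is a direct consequence of Lemma~\ref{univvertex} and Theorem~\ref{thm.chi_CiclosCaminos}. You have merely spelled out the bookkeeping that the paper leaves implicit.
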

\begin{proof} 
It is a direct consequence of Lemma~\ref{univvertex} and Theorem  \ref{thm.chi_CiclosCaminos}.
\end{proof}

Notice that NL-colorings of minimum cardinality for fans and wheels can be constructed from NL-colorings of paths and cycles, respectively, by assigning a new color to the added vertex.

\section{Unicyclic graphs}\label{unicyclics}

A connected graph is called \emph{unicyclic} if it  contains precisely one cycle.

\begin{theorem}\label{unicyclic}
Let $G$ be a  unicyclic graph. 
If $\chi _{_{NL}}(G)=k\ge3$, then $$\displaystyle{n(G)\le 2a_1(k)+a_2(k)=\,\,\, \frac 12 (k^3+k^2-2k)}.$$

Moreover, if the equality holds, then $G$ has maximum degree $3$, and it contains  $k(k-1)$ leaves,  $\frac{k(k-1)(k-2)}{2}$ vertices of degree 2, and $k(k-1)$ vertices of degree 3.
\end{theorem}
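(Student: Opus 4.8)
The plan is to fix a $k$-NL-coloring $\Pi=\{S_1,\dots,S_k\}$ of $G$ with $k=\chi_{_{NL}}(G)$ and classify the vertices by their color-degree. Write $D_j$ for the set of vertices of color-degree $j$ and $D_{\ge 3}=\bigcup_{j\ge 3}D_j$. By Remark~\ref{color_deg} and the definitions of $a_1(k)$ and $a_2(k)$ one gets $|D_1|\le a_1(k)$ and $|D_2|\le a_2(k)$ immediately, so the whole argument reduces to controlling $|D_{\ge 3}|$, and this is precisely where unicyclicity should enter.

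Next I would record the degree identity for a unicyclic graph. Since $G$ has exactly one cycle, $|E(G)|=n(G)$, hence $\sum_{v\in V(G)}(\deg(v)-2)=0$; separating the leaves (each contributing $-1$), the vertices of degree $2$ (contributing $0$), and the vertices of degree at least $3$ yields $\sum_{v:\deg(v)\ge 3}(\deg(v)-2)=n_1$, where $n_1$ denotes the number of leaves of $G$. Every summand on the left is at least $1$, so the number of vertices of degree at least $3$ is at most $n_1$; since color-degree never exceeds degree, $D_{\ge 3}\subseteq\{v:\deg(v)\ge 3\}$ and therefore $|D_{\ge 3}|\le n_1$. On the other hand every leaf has color-degree $1$, so $n_1\le|D_1|\le a_1(k)$. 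Combining, $n(G)=|D_1|+|D_2|+|D_{\ge 3}|\le |D_1|+|D_2|+n_1\le 2|D_1|+|D_2|\le 2a_1(k)+a_2(k)$, and the elementary computation $2a_1(k)+a_2(k)=2k(k-1)+\tfrac{k(k-1)(k-2)}{2}=\tfrac{k(k-1)(k+2)}{2}=\tfrac12(k^3+k^2-2k)$ finishes the bound.

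For the equality statement I would trace back which inequalities must be tight. If $n(G)=2a_1(k)+a_2(k)$, then necessarily $|D_2|=a_2(k)$, $|D_1|=a_1(k)$, and $|D_{\ge 3}|=n_1=|D_1|$. From $|D_{\ge 3}|=n_1$ together with the chain $|D_{\ge 3}|\le|\{v:\deg(v)\ge 3\}|\le n_1$ we conclude that both inequalities are equalities: every vertex of degree at least $3$ lies in $D_{\ge 3}$, and $|\{v:\deg(v)\ge 3\}|=n_1$. Feeding the latter back into $\sum_{v:\deg(v)\ge 3}(\deg(v)-2)=n_1$, where the number of summands now equals the total, forces $\deg(v)=3$ for every vertex of degree at least $3$; hence $\Delta(G)=3$ and $G$ has exactly $n_1=a_1(k)=k(k-1)$ vertices of degree $3$. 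Since $|D_1|=n_1$ and no vertex of degree at least $3$ has color-degree $1$, every color-degree-$1$ vertex is a leaf, so $G$ has exactly $k(k-1)$ leaves; the remaining $n(G)-2k(k-1)=a_2(k)=\tfrac{k(k-1)(k-2)}{2}$ vertices then have degree $2$.

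The computations throughout are routine; the only genuine content is the degree identity for unicyclic graphs and the observation that it caps $|D_{\ge 3}|$ by $n_1$, which is itself capped by $|D_1|$. I expect the part needing the most care to be the equality analysis: one must verify that tightness of the single final inequality simultaneously forces tightness of all the intermediate estimates (on $|D_1|$, $|D_2|$, $|D_{\ge 3}|$, and on the count of vertices of degree at least $3$), as this is exactly what delivers $\Delta(G)=3$ and the claimed distribution of degrees.
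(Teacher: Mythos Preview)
Your proof is correct and follows essentially the same strategy as the paper: both hinge on the unicyclic degree identity $\sum_{\deg(v)\ge 3}(\deg(v)-2)=n_1$, which caps the number of high-degree vertices by the number of leaves, combined with the color-degree bounds from Remark~\ref{color_deg}. The only cosmetic difference is that the paper partitions $V(G)$ by \emph{degree} (writing $n=n_1+n_2+n_{\ge 3}$ and bounding $n_1+n_2\le a_1(k)+a_2(k)$), whereas you partition by \emph{color-degree}; your version is arguably a touch cleaner, since the bounds $|D_j|\le a_j(k)$ are immediate, and your equality analysis is the same trace-back of tightness through the chain.
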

\begin{proof}
Let $n$, $n_1$, $n_2$ and $n_{_{\ge 3}}$ be respectively the order, the number of leaves, the number of vertices of degree $2$  and the number of vertices of degree at least $3$ of $G$.
On the one hand, we know that
    \begin{align*}
    n_1+2\, n_2+ \sum_{\deg(u)\ge 3} \deg (u)= \sum_{u\in V(G)} \deg (u)=2 | E(G) | =2 \, n =2 (n_1+n_2+ n_{_{\ge 3}} ).
    \end{align*}
    From here, we deduce that
    \begin{align}\label{n3unicyclic}
    n_1=\sum_{\deg (u)\ge 3}{(\deg(u)-2)}\ge n_{_{\ge 3}} .
    \end{align}
    On the other hand,  $\chi _{_{NL}}(G)=k$ implies  
    $n_1\le k(k-1)$ and  $n_2\le k \binom{k-1}{2} $. Therefore,  

    \begin{align*}n&=n_1+n_2+n_{_{\ge 3}}\\
    &\le  k\Big( (k-1)+\binom{k-1}{2}\Big) + n_1\\
    &\le    k\Big( (k-1)+\binom{k-1}{2}\Big)+ k(k-1)  \\
    &= 2a_1(k)+a_2(k)\\
    &= \frac 12 (k^3+k^2-2k).
    \end{align*}

Now, assume that there is a unicyclic graph $G$  attaining this bound. 
In such a case, the inequalities in the preceding expression must be equalities.
Thus, $n_{_{\ge 3}}=n_1=k(k-1)$, and  $n_2=k\binom{k-1}{2}=\frac{k(k-1)(k-2)}{2}$.
Finally, from Inequality~\ref{n3unicyclic}, we deduce that $n_{_{\ge 3}}=n_1$ if and only if there are no vertices of degree greater than $3$.
Therefore, there are exactly $n_{_{\ge 3}}$ vertices of degree $3$. 
Since $n_{_{\ge 3}}=n_1=k(k-1)$, the proof is complete.
\end{proof}

The bound given in Theorem~\ref{unicyclic} is tight for $k\ge 5$. 
To prove this, we first give a $k$-NL-coloring of the comb of order $2k(k-1)$.
Recall that, for every integer $m\ge3$,  the \emph{comb} $B_{m}$ is the tree obtained by attaching one leaf at every vertex of $P_m$, the path of order $m$.


\begin{prop}\label{pro.colorcomb} 
For every $k\ge 5$, there is a $k$-NL-coloring of the comb $B_{k(k-1)}$. 
\end{prop}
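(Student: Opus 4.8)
The plan is to construct the $k$-NL-coloring of $B_{k(k-1)}$ explicitly, exploiting the fact that in a comb the spine vertices have degree $3$ and the pendant leaves have degree $1$, so every coloring is automatically "fit" for a coloring with $k$ colors once we control the neighborhood-color-sets. Write the spine as $v_1 v_2 \dots v_{k(k-1)}$ and let $w_i$ be the leaf attached at $v_i$. A leaf $w_i$ is distinguished from the other leaves of its color purely by the color of $v_i$, so among all leaves sharing color $c$ we need the colors of their supports to be pairwise distinct; hence color $c$ may be used on at most $k-1$ leaves (one support of each color $\neq c$), giving at most $k(k-1)$ leaves total — exactly the order we are targeting. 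So the leaf-coloring must be \emph{tight}: for every ordered pair $(c,d)$ with $c\neq d$ there is exactly one leaf of color $c$ whose support has color $d$. The spine vertices, having degree $3$, are distinguished by the (multiset-free) set of colors of their three neighbors $\{v_{i-1},v_{i+1},w_i\}$; two spine vertices of the same color must differ in this set.

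First I would fix the leaf assignment: enumerate the $k(k-1)$ ordered pairs $(c,d)$, $c\ne d$, and decide along the spine that $v_i$ gets some color $\sigma(i)$ and $w_i$ gets color $\tau(i)$, in such a way that the map $i \mapsto (\tau(i),\sigma(i))$ is a bijection onto the ordered pairs; this guarantees all leaves are located. Next I would arrange the spine sequence $\sigma$ so that (a) consecutive spine vertices get different colors (properness), (b) $\sigma(i)\ne\tau(i)$ for all $i$ (properness of the pendant edge), and (c) the spine vertices are located: for two indices $i<j$ with $\sigma(i)=\sigma(j)$ we need $\{\sigma(i-1),\sigma(i+1),\tau(i)\}\ne\{\sigma(j-1),\sigma(j+1),\tau(j)\}$, with the obvious truncation at the two endpoints $v_1$ and $v_{k(k-1)}$ (which have degree $2$, so only two neighbor-colors). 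The natural way to achieve (c) is to make the \emph{pair of spine-neighbor colors} $\{\sigma(i-1),\sigma(i+1)\}$ together with $\tau(i)$ already do the work; since each color class on the spine has exactly $k-1$ members and there are $\binom{k-1}{2}+\cdots$ possible neighbor-patterns available, a counting check (analogous to Remark~\ref{pro.verticeslk}) shows there is just enough room, which is why the hypothesis $k\ge 5$ is needed — for small $k$ the endpoints and parity obstructions eat up the slack.

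Concretely I would build $\sigma$ in blocks: group the spine into $k$ consecutive blocks of length $k-1$, let block $b$ consist of vertices colored with the $k-1$ colors other than $b$ in a prescribed cyclic order, and then choose $\tau$ on each block so that within block $b$ every spine vertex has a distinct leaf-color and the resulting triples are distinct; the block structure makes the neighbor-color pair of an interior vertex of block $b$ equal to $\{b, \ast\}$ in a controlled way, and crossing from block to block is handled by picking the cyclic orders compatibly (this is exactly the kind of "rotate the order in each block" trick, and a figure would accompany it). I expect the main obstacle to be the bookkeeping at the $k-1$ block boundaries and at the two spine endpoints: there the neighbor-set of a spine vertex loses one color (endpoint) or mixes colors from two blocks, so I must verify by hand that no collision of triples is created there, and fine-tune the leaf-colors $\tau$ at those $O(k)$ special positions. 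Once those boundary cases are dispatched, properness and the locating property for all interior vertices follow from the block construction, and counting confirms we have used exactly $k$ colors on $2k(k-1)$ vertices, completing the construction.
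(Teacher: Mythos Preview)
Your block strategy is exactly the paper's: partition the spine into $k$ consecutive blocks of length $k-1$ and color block $b$ with the $k-1$ colours of $\{1,\dots,k\}\setminus\{b\}$ in a cyclic order.  The paper then takes the simplest possible leaf rule, $\tau\equiv b$ on block $b$, and specifies the cyclic order precisely (decreasing, with a starting point that depends on the parity of $b$, plus one local swap in the last block when $k$ is odd).

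There is, however, an internal inconsistency in your sketch.  You say you will choose $\tau$ so that ``within block $b$ every spine vertex has a distinct leaf-color'', and in the next breath you claim that the neighbour-set of an interior vertex of block $b$ has the form $\{b,\ast\}$.  But both spine neighbours of an interior vertex of block $b$ are themselves in block $b$, hence coloured from $\{1,\dots,k\}\setminus\{b\}$; the only way $b$ enters the neighbour-set is through the leaf, which forces $\tau\equiv b$ on the block and contradicts ``distinct leaf-colours''.  The paper's choice $\tau\equiv b$ is what makes the block index visible in every interior triple, and it is this that separates spine vertices of the same colour sitting in different blocks.

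More importantly, the entire content of this proposition is the verification you defer.  With $\tau\equiv b$, the triple of the vertex coloured $\ell$ in block $r$ is generically $\{r,\ell-1,\ell+1\}$, but the skip at colour $r$, the $k-1$ block boundaries, and the two spine endpoints all create exceptional triples; one must check that none of these collide with each other or with a generic triple.  In the paper this check fills a page of case tables (separate analyses for $\ell$ even, $\ell$ odd, and for $\ell\in\{1,2,3,k-1,k\}$), and the parity-dependent starting points and the final swap when $k$ is odd are there precisely to kill specific collisions that the naive choice produces.  Saying you will ``fine-tune $\tau$ at $O(k)$ special positions'' is not a proof: you have not committed to a concrete cyclic order, so there is nothing yet to verify, and the fine-tuning cannot be on $\tau$ alone since $\tau\equiv b$ is forced by your own $\{b,\ast\}$ claim.
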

\begin{proof}
    Let $k\ge 5$. 
    Consider the comb $B_{k(k-1)}$ obtained by hanging a leaf to each vertex of a path $P$ of order $k(k-1)$.
    We color with color $1$ the leaves hanging from the first $k-1$ vertices of the path $P$; with color $2$ the leaves hanging from the following $k-1$ vertices of $P$; and so on.
    For every $r\in \{ 1,\dots ,k \}$, consider the set $M_r$ containing the $k-1$ vertices of $P$ adjacent to the leaves colored with $r$. 
    We  define a bijection between the vertices of $M_r$ and the $k-1$ colors of the set $L_r=\{ 1,2,\dots ,k\}\setminus \{ r \}$.
    Set $M_r=\{ x_1^r,\dots ,x_{k-1}^r\}$ so that $x_i^r x_{i+1}^r\in E(P)$ for every $i\in \{ 1,\dots ,k-2 \}$, and
    $x_{k-1}^rx_1^{r+1}\in E$ if $r<k$.
\medskip
    
    \begin{figure}[!ht]
    	\begin{center}
    		\includegraphics[width=0.57\textwidth,page=4]{figuras}
    		\vspace{4mm}
    		\newline
    		\includegraphics[width=0.75\textwidth,page=5]{figuras}
    		\vspace{4mm}
    		\newline
    		\includegraphics[width=0.95\textwidth,page=6]{figuras}
    		\vspace{5mm}
    		\newline
    		\includegraphics[width=0.75\textwidth,page=7]{figuras}
    		\caption{A 5-NL-coloring of  the comb $B_{20}$, a 6-NL-coloring of  the comb $B_{30}$ and a 7-NL-coloring of  the comb $B_{42}$.
    			In white, adjacent vertices of $M_r$ with no consecutive colors modulo $k$.
    			In $B_{20}$ and in $B_{42}$, we have shifted the colors of the vertices in gray with respect to the general rule used to the vertices of $M_r$,
    			when $r$ is odd.    Below, the general rule for coloring adjacent vertices of consecutive groups $M_r$ and $M_{r+1}$ and the leaves hanging from them. In all cases, the colors involved are  $r-1$, $r$, $r+1$ and $r+2$.}\label{comb6}
    	\end{center}
    \end{figure}
 
 \newpage
    We assign the colors of $L_r$ in cyclically decreasing order to the vertices $x_1^r,\dots ,x_{k-1}^r$ beginning with different colors in each case:
    \begin{enumerate}[$\bullet$]
    	\item If  $r$ is even, then we begin with $r+1$ modulo $k$. 
    	Therefore, $x_1^r$ and  $x_2^r$  are colored respectively with $r+1$ and $r-1$ modulo $k$.
    	\item If  $r$ is odd and $r<k$, then we begin with $r-2$ modulo $k$.
    	Therefore, $x_{k-2}^r$  and $x_{k-1}^r$ are colored respectively with $r+1$ and $r-1$ modulo $k$.
    	\item If $r$ is odd and $r=k$, then  we proceed as in the case $r$ odd and $r<k$,
    	but we switch the colors of the last three vertices so that $x_{k-3}^k$, $x_{k-2}^k$ and $x_{k-1}^k$
    	have color $k-1$, $1$ and $2$, respectively.
    \indent
    \end{enumerate}
    
        See the defined $k$-NL-coloring of the comb $B_{k(k-1)}$ for $k\in \{5,6,7 \}$ in Figure~\ref{comb6}.
    \medskip
          
    Notice that the colors of two consecutive vertices of $M_r$ differ by one unit modulo $k$, except for the first two vertices, when $r$ is even, and for the last two vertices, when $r$ is odd.
    Besides,
    the first vertex of $M_r$ is always colored with an odd number and the last vertex of $M_r$ is colored with an even number whenever $k$ is even or when $k$ is odd and $r\notin \{1,k-1,k\}$.
We claim that this procedure gives a $k$-NL-coloring of the comb $B_{k(k-1)}$.
\medskip

We only have to prove that for every pair of non-leaves with the same color, the sets of colors of their neighborhoods are different.

Let $l\in \{ 1,\dots ,k\}$.
There are exactly $k-1$ non-leaves colored with $l$, and exactly one of them belongs to $M_r$, for every $r\in \{1,\dots ,k\}\setminus \{ l \}$.
Let $v_l^r$ be the vertex of $M_r$ colored with $l$.
Notice that the colors of the neighbors of $v_l^r$ are $\{ r, l-1,l+1\}$, except when $v_l^r$ occupies the first or last positions in $M_r$. Concretely, this happens  for $r\in \{ l-2,l-1,l+1\}$, if $l$ is even, and for $r\in \{ l-1,l+1,l+2\}$, if $l$ is odd,
whenever $l\not= \{ 1,2,3,k-1,k\}$. 
Those last cases are analyzed separately.
\medskip

We summarize in  Table 1 the colors of the neighbors of $v_l^r$, $r\not= l$, for all cases.
Observe that the sets of colors of the neighbors of $v_l^1,v_l^2,\dots ,v_l^{k-1}$ are  different for every case, so that we have a $k$-NL-coloring for each case.
\end{proof}


    \newpage
        \noindent
        \begin{tabular}{ | l | l |}
            \hline
            \multicolumn{2}{|c|}{ $l$ even, $l\not= 2,k-1,k$}\\
            \hline
            $ r $ & colors of $N(v_l^r)$ \\
            \hline
            \hline
            $r\notin \{ l-2, l-1, l+1 \}$ & $\{ r,l-1,l+1\}$  \\
            \hline
            $l-1$ & $\{ l-2,l-1,l+1 \} $ \\
            \hline
            $l+1$  & $\{ l+1,l+2,l+3 \} $\\
            \hline
            $l-2$  & $\{ l-3, l-2,l+1 \} $\\
            \hline
        \end{tabular}
        %
        %
        \begin{tabular}{ | l | l |}
            \hline
            \multicolumn{2}{|c|}{ $l$ odd, $l\not= 1,3,k-1,k$}\\
            \hline
            $ r $ & colors of $N(v_l^r)$\\
            \hline
            \hline
            $r\notin \{ l-1, l+1, l+2 \}$ & $\{ r,l-1,l+1\}$ \\
            \hline
            $l-1$ & $\{ l-3,l-2,l-1 \}$ \\
            \hline
            $l+1$ & $\{ l-1, l+1,l+2 \} $ \\
            \hline
            $l+2$ & $\{ l-1, l+2,l+3 \}$ \\
            \hline
        \end{tabular}
        \vspace{3mm}

        \noindent
        \begin{tabular}{ | l | l |}
            \hline
            \multicolumn{2}{|c|}{ $l=1$}\\
            \hline
            $ r $ \phantom{xxxxxxxxxxxxxxxx} & colors of $N(v_1^r)$ \phantom{xxx}\\
            \hline
            \hline
            $r\notin \{ 2,3,4 \}$ & $\{ r,2,k\}$  \\
            \hline
            2 & $\{ 2,3,k\} $ \\
            \hline
            3  & $\{ 3,4,k \} $\\
            \hline
            $k$ even  & $ \{ k-2,k-1,k \}$\\
            $k$ odd  & $ \{ 2, k-2,k-1 \}$\\
            \hline
        \end{tabular}
        %
        \noindent
        \begin{tabular}{ | l | l |}
            \hline
            \multicolumn{2}{|c|}{ $l=2$}\\
            \hline
            $ r $ \phantom{xxxxxxxxxxxxxxxx} & colors of $N(v_2^r)$ \phantom{xxx} \\
            \hline
            \hline
            $r\notin \{ 1,3,k \}$ & $\{ r,1,3\}$  \\
            \hline
            1 & $\{ 1,3,k\} $ \\
            \hline
            3  & $\{ 3,4,5 \} $\\
            \hline
            $k$ even  & $ \{ 3,k \}$\\
            $k$ odd  & $ \{ 1,k \}$\\
            \hline
        \end{tabular}
        \vspace{3mm}

        \noindent
        \begin{tabular}{ | l | l |}
            \hline
            \multicolumn{2}{|c|}{ $l=3$, $k\ge 6$ even}\\
            \hline
            $ r $ & colors of $N(v_3^r)$  \phantom{xxxx} \\
            \hline
            \hline
            $r\notin \{ 2,4,5,k-1,k \}$ & $\{ r,2,4\}$  \\
            \hline
            2 & $\{ 1,2,k\} $ \\
            \hline
            4  & $\{ 2,4,5 \} $\\
            \hline
            5 & $\{ 2,5 ,6\} $\\
            \hline
            $k-1$  & $ \{ 2,4,k-1 \}$\\
            \hline
            $k$   & $ \{ 2,4,k \}$\\
            \hline
        \end{tabular}
        \begin{tabular}{ | l | l |}
            \hline
            \multicolumn{2}{|c|}{ $l=3$, $k\ge 7$ odd}\\
            \hline
            $ r $ & colors of $N(v_3^r)$ \phantom{xxxx}\\
            \hline
            \hline
            $r\notin \{ 2,4,5,k-1,k \}$ & $\{ r,2,4\}$  \\
            \hline
            2 & $\{ 1,2,k\} $ \\
            \hline
            4  & $\{ 2,4,5 \} $\\
            \hline
            5 & $\{ 2,5 ,6\} $\\
            \hline
            $k-1$& $ \{ 2,k-1,k \}$\\
            \hline
            $k$   & $ \{ 4,k-1,k \}$\\
            \hline
        \end{tabular}
        \vspace{3mm}

        \noindent
        \begin{tabular}{ | l | l |}
            \hline
            \multicolumn{2}{|c|}{ $l=k-1$, $k$ even}\\
            \hline
            $ r $  \phantom{xxxxxxxxxxxxxxxx} & colors of $N(v_{k-1}^r)$ \\
            \hline
            \hline
            $r\notin \{ 1,k-2,k \}$ & $\{ r,k-2,k\}$  \\
            \hline
            $1$ & $\{ 1,k-2 \} $ \\
            \hline
            $k-2$ & $\{k-4,k-3,k-2 \} $ \\
            \hline
            $k$  & $\{ 1,k-2,k\} $\\
            \hline
        \end{tabular}
        %
        %
        \noindent
        \begin{tabular}{ | l | l |}
            \hline
            \multicolumn{2}{|c|}{ $l=k-1$, $k$ odd}\\
            \hline
            $ r $ \phantom{xxxxxxxxx} & colors of $N(v_{k-1}^r)$ \phantom{}\\
            \hline
            \hline
            $r\notin \{ 1,k-3,k-2,k \}$ & $\{ r,k-2,k\}$  \\
            \hline
            $1$ & $\{ 1,k-2 \} $ \\
            \hline
            $k-3$ & $\{k-4,k-3,k \} $ \\
            \hline
            $k-2$ & $\{k-3,k-2,k \} $ \\
            \hline
            $k$  & $\{ 1,3,k\} $\\
            \hline
        \end{tabular}
        \vspace{3mm}

        \noindent
        \begin{tabular}{ | l | l |}
            \hline
            \multicolumn{2}{|c|}{ $l=k$, even}\\
            \hline
            $ r $   \phantom{xxxxxxxxxx} & colors of $N(v_{k}^r)$  \phantom{xxxx} \\
            \hline
            \hline
            $r\notin \{ 1,k-2,k-1 \}$ & $\{ r,1,k-1\}$  \\
            \hline
            $1$ & $\{ 1,2,3\} $ \\
            \hline
            $k-2$ & $\{1,k-3,k-2 \} $ \\
            \hline
            $k-1$  & $\{ 1,k-2,k-1\} $\\
            \hline
        \end{tabular}
        %
        \noindent
        \begin{tabular}{ | l | l |}
            \hline
            \multicolumn{2}{|c|}{ $l=k$, odd}\\
            \hline
            $ r $  \phantom{xxxxxxxxxxxxxxxx} & colors of $N(v_{k}^r)$  \phantom{xxx} \\
            \hline
            \hline
            $r\notin \{ 1,k-1 \}$ & $\{ r,1,k-1\}$  \\
            \hline
            $1$ & $\{ 1,2,3\} $ \\
            \hline
            $k-1$  & $\{ k-3,k-2,k-1\} $\\
            \hline
        \end{tabular}
        \label{fig.tabla_colorings}
\vspace{.2cm}
\begin{center}
Table 1: Colors of the neighborhoods of non-leaves of the comb $B_{k(k-1)}$.
\end{center}
\vspace{.7cm}

\newpage
\vspace{.2cm}
\begin{prop}\label{pro.tightuniciclico}
For every $k\ge 5$, there is a unicyclic graph $U_k$ with  NLC-number $\chi_{_{NL}}(U_k)=k$ and order $n(U_k)=2a_1(k)+a_2(k)$.
\end{prop}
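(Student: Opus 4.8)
The idea is to build $U_k$ from the comb $B_{k(k-1)}$ of Proposition~\ref{pro.colorcomb}, whose $k$-NL-coloring is already in hand, by joining the two ends of its spine path $P$ so that the $k(k-1)$ spine vertices become a single cycle. This immediately produces a unicyclic graph: the $k(k-1)$ leaves of the comb stay as leaves, the $k(k-1)$ former spine vertices now have degree $3$, and there are no vertices of degree $2$ coming from the comb. To reach the claimed order $n(U_k)=2a_1(k)+a_2(k)=2k(k-1)+\frac{k(k-1)(k-2)}{2}$, which exceeds $2k(k-1)$ by exactly $a_2(k)=k\binom{k-1}{2}$, I then subdivide certain spine edges, inserting $a_2(k)$ new vertices of degree $2$. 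Each inserted vertex must receive a color making the whole thing an NL-coloring, and after the construction we must have exactly $k(k-1)$ leaves, $\frac{k(k-1)(k-2)}{2}$ degree-$2$ vertices, and $k(k-1)$ degree-$3$ vertices — matching the tightness statement of Theorem~\ref{unicyclic}.

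\textbf{Key steps.} First, close up the comb's spine into a cycle and verify the resulting coloring is still an NL-coloring: the only vertices whose neighborhood color-set changes are the two endpoints of $P$, which acquire one new neighbor each; since previously these were the only spine vertices of degree $2$ and now every spine vertex has degree $3$, I must check that the three spine vertices involved (the two old endpoints and whichever were their new neighbors) still have pairwise-distinct neighborhood color-sets among vertices of the same color. One can arrange the identification of the path's two ends to fall between two groups $M_r$ and $M_s$ whose colors are under control, or simply recompute the handful of affected entries of Table~1. Second, I introduce the $a_2(k)$ degree-$2$ vertices by repeatedly subdividing spine edges; here the right model is the operation (OP2)-style insertion, or more simply: pick, for each of the $k\binom{k-1}{2}$ vertices of color-degree $2$ that a $k$-NL-coloring of a graph of order $\ell(k)$ would need (counted as in Remark~\ref{pro.verticeslk}), an edge of the cycle part and insert a degree-$2$ vertex with a color chosen so that its color-degree is $2$ and its neighborhood color-set is the "missing" pair, exactly as in the construction behind Lemma~\ref{pro.1pairedcoloring}. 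Third, tally degrees: no operation changes the leaf count or the number of degree-$3$ vertices, and exactly $a_2(k)$ degree-$2$ vertices are added, so $n(U_k)=2k(k-1)+k\binom{k-1}{2}=2a_1(k)+a_2(k)$; combined with $\chi_{_{NL}}(U_k)\le k$ from the constructed coloring and $\chi_{_{NL}}(U_k)\ge k$ from Theorem~\ref{unicyclic} applied with this order, we get $\chi_{_{NL}}(U_k)=k$.

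\textbf{Main obstacle.} The delicate part is the bookkeeping showing the inserted degree-$2$ vertices can all be assigned colors simultaneously so that (a) each has color-degree exactly $2$, (b) no two degree-$2$ vertices of the same color end up with the same neighborhood color-pair, and (c) the insertions do not spoil the NL-property among the degree-$3$ spine vertices whose incident edges get subdivided — because subdividing an edge $xy$ and coloring the new vertex with color $j$ changes the neighborhood color-set of $x$ (it loses the color of $y$ and gains $j$) and likewise of $y$. The cleanest route is to only subdivide spine edges whose two endpoints already share a common neighbor color among the leaves or the rest of the cycle, so that the endpoint's neighborhood color-set is unchanged — this is precisely why the comb coloring was built with so much structure in Table~1, and one exhibits explicitly, for each unordered pair $\{i,j\}\subseteq\{1,\dots,k\}$, a safe spine edge into which a vertex realizing that pair can be inserted. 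Verifying that $k\binom{k-1}{2}$ pairwise-distinct such safe edges exist (they do, since the cycle has $k(k-1)$ edges and $k\binom{k-1}{2}\le k(k-1)$ fails for $k\ge4$ — so in fact one needs repeated subdivision of some edges, handled by inserting a short path and coloring its vertices in a rotating pattern as in the proof of Lemma~\ref{pro.1pairedcoloring}) is the computational heart of the argument, and I expect it to occupy the bulk of the proof; the hypothesis $k\ge5$ is presumably exactly what makes enough room for this.
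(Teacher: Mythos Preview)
Your plan differs substantially from the paper's construction, and the part you flag as the ``main obstacle'' is a genuine gap that your sketch does not close.

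The paper's argument is much more direct: it takes the $k$-NL-coloring of $C_{a_2(k)}$ produced in Lemma~\ref{pro.1pairedcoloring} (the one in which every vertex has color-degree~$2$), finds an edge $xy$ of that cycle with colors $2$ and $k-1$, deletes it, and splices the resulting path onto the two degree-$2$ endpoints $x',y'$ of the spine of the comb $B_{k(k-1)}$ (colored $k-1$ and $2$, respectively). The result is a single cycle of length $a_2(k)+k(k-1)$ with $k(k-1)$ leaves attached to $k(k-1)$ consecutive cycle vertices. Only the two vertices $x'$ and $y'$ acquire a new neighbor, so only two neighborhood color-sets need to be recomputed and checked against Table~1; everything else is inherited verbatim from the two already-verified colorings.

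Your approach instead closes the comb's spine into a cycle of length $k(k-1)$ and then tries to insert $a_2(k)$ degree-$2$ vertices by subdividing spine edges. The difficulty you identify is real, and neither of your proposed remedies works. An (OP2)-type insertion on a spine edge with endpoints colored $a$ and $b$ produces two new degree-$2$ vertices of color-degree~$1$: one of color $b$ with neighborhood color-set $\{a\}$, and one of color $a$ with neighborhood color-set $\{b\}$. But the comb already contains, for every ordered pair $(r,c)$ with $r\neq c$, a leaf of color $r$ whose unique neighbor has color $c$; hence every (OP2) insertion immediately conflicts with two existing leaves. So the inserted vertices must all have color-degree~$2$, which forces single-vertex (OP1)-type insertions. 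But each such insertion changes the neighborhood color-set of both adjacent degree-$3$ spine vertices, and since you need $k\binom{k-1}{2}>k(k-1)$ insertions for $k\ge 5$, every spine vertex has at least one of its two spine edges subdivided and therefore has its Table~1 entry altered. At that point the verification in Proposition~\ref{pro.colorcomb} no longer applies, and you would have to redo the entire case analysis for the degree-$3$ vertices from scratch---which is essentially constructing a new NL-coloring rather than leveraging the existing one.

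The paper sidesteps all of this by keeping the degree-$2$ vertices in a separate, pre-colored block (the cycle $C_{a_2(k)}$) and gluing it to the comb at a single edge, so that only a constant number of neighborhood color-sets change.
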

\begin{proof}
Consider the $k$-NL-coloring of the cycle $C_{a_2(k)}$ obtained in the proof of Lemma~\ref{pro.1pairedcoloring}, that is, with all vertices having color-degree 2. 
There is an edge $xy$ with its endpoints $x$ and $y$ colored respectively with $2$ and $k-1$.
Consider the $k$-NL-coloring of the comb $B_{k(k-1)}$ given in the proof of Proposition~\ref{pro.colorcomb}.
Let $x'$ and $y'$ be  the vertices of degree $2$ of the comb $B_{k(k-1)}$ colored with $k-1$ and $2$, respectively.
Consider the unicyclic graph $U_k$ obtained from the union of the cycle and the comb, deleting the edge $xy$ from  the cycle $C_{a_2(k)}$ and adding the edges $xx'$ and $yy'$.
Notice that   $V(U_k)=V(C_{a_2(k)})\cup V(B_{k(k-1)})$, and thus the order of $U_k$ is $n(U_k)=n(C_{a_2(k)})+n(B_{k(k-1)})=a_2(k)+2k(k-1)=2a_1(k)+a_2(k)$ (see in Figure~\ref{uniciclic6} the case $k=6$).

\vspace{.1cm}
We claim that the $k$-NL-colorings of the cycle and the comb induce a $k$-NL-coloring in $U_k$.
We  have only changed the colors of the neighborhoods of $x'$ and $y'$.
On the one hand $x'$ has color $k-1$ and the colors of its neighbors are $\{ 1, 2, k-2 \}$. 
On the other hand, $y$ has color $2$ and the colors of its neighbors are $\{ 3,k-1,k\}$ if $k$ is even, and $\{ 1,k-1,k\}$, if $k$ is odd. 
We can check in the tables given in the proof of Proposition~\ref{pro.colorcomb} that any other vertex of the comb $B_{k(k-1)}$ has different color or different set of colors in their neighborhoods from those of $x'$ and $y'$. 
Hence, we have a $k$-NL-coloring of $U_k$.
\end{proof}


\begin{figure}[!hbt]
    \begin{center}
        \includegraphics[width=0.84\textwidth,page=8]{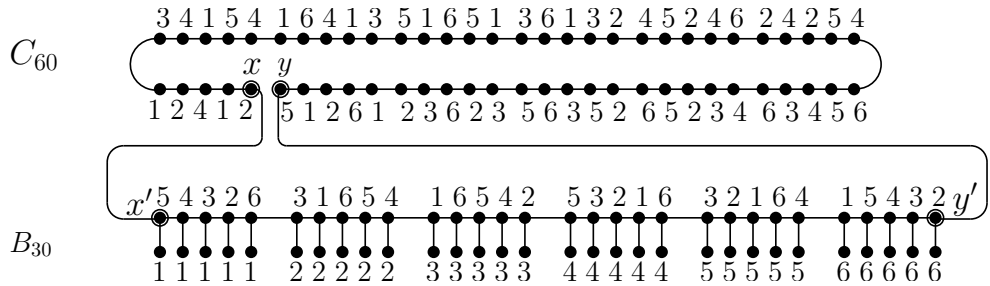}
        \caption{A 6-NL-coloring of  the  unicyclic graph $U_6$.}\label{uniciclic6}
    \end{center}
\end{figure}

\vspace{.2cm}
\begin{cor}
For every $k\ge 5$, the bound given in Theorem~\ref{unicyclic} is tight.
\end{cor}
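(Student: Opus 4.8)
The plan is to observe that this corollary is an immediate consequence of Proposition~\ref{pro.tightuniciclico}, so the argument is essentially a bookkeeping step. Recall that Theorem~\ref{unicyclic} asserts that every unicyclic graph $G$ with $\chi_{_{NL}}(G)=k\ge 3$ satisfies $n(G)\le 2a_1(k)+a_2(k)$. To show this inequality cannot be improved for a given $k\ge 5$, it suffices to exhibit a single unicyclic graph attaining equality, and Proposition~\ref{pro.tightuniciclico} does exactly that: it produces the graph $U_k$, which is unicyclic, has $\chi_{_{NL}}(U_k)=k$, and has order $n(U_k)=2a_1(k)+a_2(k)$.

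Concretely, I would simply write: ``Fix $k\ge 5$. By Proposition~\ref{pro.tightuniciclico}, there is a unicyclic graph $U_k$ with $\chi_{_{NL}}(U_k)=k$ and $n(U_k)=2a_1(k)+a_2(k)=\tfrac12(k^3+k^2-2k)$. Hence the upper bound of Theorem~\ref{unicyclic} is achieved, i.e. it is tight.'' One may optionally add, as a remark, that the structural description in Theorem~\ref{unicyclic} of an extremal graph (maximum degree $3$, with $k(k-1)$ leaves, $\tfrac{k(k-1)(k-2)}{2}$ vertices of degree $2$, and $k(k-1)$ vertices of degree $3$) is consistent with the construction of $U_k$: the comb $B_{k(k-1)}$ contributes $k(k-1)$ leaves and $k(k-1)$ path-vertices (which become degree-$3$ vertices once two of them are joined to the cycle), while the cycle $C_{a_2(k)}$ contributes the $a_2(k)=\tfrac{k(k-1)(k-2)}{2}$ degree-$2$ vertices.

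There is no real obstacle here: all the work has already been carried out in Propositions~\ref{pro.colorcomb} and~\ref{pro.tightuniciclico}, in particular the verification (via Table~1) that merging the $k$-NL-coloring of the comb with the $k$-NL-coloring of $C_{a_2(k)}$ along the edges $xx'$ and $yy'$ still yields an NL-coloring. The only point worth double-checking is the hypothesis $k\ge 5$, which is exactly the range in which Proposition~\ref{pro.tightuniciclico} (and the underlying comb coloring of Proposition~\ref{pro.colorcomb}) is stated; for $k\in\{3,4\}$ the question of tightness is left open or treated elsewhere, so the corollary is correctly restricted to $k\ge 5$.
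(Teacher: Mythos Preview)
Your proposal is correct and matches the paper's approach exactly: the paper states this corollary without proof, as an immediate consequence of Proposition~\ref{pro.tightuniciclico}, which is precisely what you observe. Your optional structural remark is also accurate (and a nice sanity check), though note that only the two endpoint spine-vertices of the comb actually change degree when attached to the cycle; the interior spine-vertices already have degree~$3$.
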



\section{Trees}\label{s.trees}

In this section, we give some bounds for trees.

\begin{theorem}\label{trees}
Let $T$ be a non-trivial  tree. 
If  $\chi _{_{NL}}(T)=k\ge3$, then
    $$\displaystyle{n(G)\le 2a_1(k)+a_2(k)-2= \frac 12 (k^3+k^2-2k-4)}.$$
Moreover, if the equality holds, then $T$ has maximum degree $3$ and it contains  $k(k-1)$ leaves,  $\frac{k(k-1)(k-2)}{2}$ vertices of degree 2, and $k(k-1)-2$ vertices of degree 3.
    \end{theorem}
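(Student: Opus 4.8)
The plan is to follow the proof of Theorem~\ref{unicyclic} almost verbatim, the only structural change being that a tree has $n-1$ edges instead of $n$. Let $n$, $n_1$, $n_2$ and $n_{_{\ge 3}}$ denote respectively the order of $T$, the number of leaves, the number of vertices of degree $2$, and the number of vertices of degree at least $3$. First I would write down the handshake identity, which for a tree reads
$$n_1 + 2\,n_2 + \sum_{\deg(u)\ge 3}\deg(u) \;=\; \sum_{u\in V(T)}\deg(u) \;=\; 2\,|E(T)| \;=\; 2(n-1) \;=\; 2(n_1+n_2+n_{_{\ge 3}})-2,$$
and rearrange it to obtain
$$n_1 \;=\; \sum_{\deg(u)\ge 3}(\deg(u)-2) \,+\, 2 \;\ge\; n_{_{\ge 3}} + 2 .$$
This is the analogue of Inequality~\ref{n3unicyclic}, now with an extra $+2$, and it is precisely where the improvement of the bound by $2$ over the unicyclic case originates.

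Next I would bring in the color-degree restrictions of Remark~\ref{color_deg}. Every leaf has color-degree $1$ and every vertex of degree $2$ has color-degree at most $2$, so $n_1$ is at most the number of vertices of color-degree $1$, which is at most $a_1(k)=k(k-1)$, while $n_1+n_2$ is at most the number of vertices of color-degree $1$ or $2$, which is at most $a_1(k)+a_2(k)=\ell(k)$. Feeding these bounds into the inequality above yields
$$n=(n_1+n_2)+n_{_{\ge 3}}\;\le\; \ell(k)+(n_1-2)\;\le\;\ell(k)+a_1(k)-2 \;=\; 2a_1(k)+a_2(k)-2 \;=\; \frac12(k^3+k^2-2k-4),$$
which is the claimed bound.

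For the equality statement I would observe that, when equality holds, all inequalities used above are equalities. From $n_1=a_1(k)=k(k-1)$ and $n_1+n_2=\ell(k)=a_1(k)+a_2(k)$ it follows that $n_2=a_2(k)=\frac{k(k-1)(k-2)}{2}$, and from $n_{_{\ge 3}}=n_1-2$ it follows that $n_{_{\ge 3}}=k(k-1)-2$. Finally, equality in $n_1\ge n_{_{\ge 3}}+2$ means $\sum_{\deg(u)\ge 3}(\deg(u)-2)=n_{_{\ge 3}}$, i.e. $\sum_{\deg(u)\ge 3}(\deg(u)-3)=0$; since every summand is nonnegative, each vertex of degree at least $3$ has degree exactly $3$, so $\Delta(T)=3$ and $T$ has exactly $k(k-1)-2$ vertices of degree $3$.

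I do not expect a genuine obstacle here: once the edge count is replaced correctly, the proof is the same counting argument as for Theorem~\ref{unicyclic}. The only point needing a little care is the color-degree bookkeeping — checking that leaves and degree-$2$ vertices contribute only to color-degrees $1$ and $2$, so that $n_1\le a_1(k)$ and $n_1+n_2\le\ell(k)$ — but this is immediate from Remark~\ref{color_deg}. Showing that the bound is actually attained (presumably via a comb-like construction analogous to Proposition~\ref{pro.tightuniciclico}) would be a separate task, not required for the statement above.
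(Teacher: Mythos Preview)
Your proof is correct and is essentially the same argument as the paper's: the handshake identity for a tree gives $n_1\ge n_{\ge 3}+2$, and combining this with the color-degree bounds from Remark~\ref{color_deg} yields the inequality and its equality conditions exactly as you describe. If anything, your formulation of the color-degree step---bounding $n_1$ and $n_1+n_2$ rather than $n_1$ and $n_2$ separately---is slightly cleaner, since a degree-$2$ vertex may have color-degree $1$; but the chain of inequalities used is identical to the paper's.
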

\begin{proof}
   Let $n$, $n_1$, $n_2$ and $n_{_{\ge 3}}$ be respectively the order, the number of leaves, the number of vertices of degree $2$ and the number of vertices of degree at least $3$ of a tree $T$.
    On the one hand, we know that
    \begin{align*}
    n_1+2\, n_2+ \sum_{\deg(u)\ge 3} \deg (u)= \sum_{u\in V(T)} \deg (u)=2 | E(T) | =2 (n-1)=2 (n_1+n_2+ n_{_{\ge 3}} -1).
    \end{align*}
    From here, we deduce that
    \begin{align}\label{n3}
    n_1=\sum_{\deg (u)\ge 3}{(\deg(u)-2)}+2\ge n_{_{\ge 3}} + 2.
    \end{align}
    On the other hand,
     $\chi _{_{NL}}(G)=k$ implies
    $n_1\le k(k-1)$ and  $n_2\le k \binom{k-1}{2} $.
    Therefore,
    \begin{align*}n&=n_1+n_2+n_{_{\ge 3}}\\
    &\le  k\Big( (k-1)+\binom{k-1}{2}\Big) + (n_1-2)\\
    &\le  k\Big( (k-1)+\binom{k-1}{2}\Big) + k(k-1)-2\\
    & =2a_1(k)+a_2(k)-2 \\
    &= \frac 12 (k^3+k^2-2k-4).
    \end{align*}

    Next, assume that there is a tree attaining this bound. In such a case, the inequalities in the preceding expression must be equalities.
    Thus, $n_{_{\ge 3}}=n_1-2=k(k-1)-2$, 
     $n_1=k(k-1)$, 
      and  $n_2=k\binom{k-1}{2}=\frac{k(k-1)(k-2)}{2}$.
    Finally, from Inequality~\ref{n3}, we deduce that $n_{_{\ge 3}}=n_1-2$ if and only if there are no vertices of degree greater than $3$.
    Therefore, there are exactly $n_{_{\ge 3}}$ vertices of degree $3$. Since $n_{_{\ge 3}}=n_1-2=k(k-1)-2$, the proof is complete.
\end{proof}

\begin{center}
    \begin{tabular}{c||c|c|c||ccc}
    $\chi_{_{NL}}(G)$  &  general graphs & $\Delta(G) =2$&  trees && trees & \\   
              \hline    
        $k$&$k\, (2^{k-1}-1)$&$\frac 12 (k^3-k^2)$&$\frac{1}{2} (k^3+k^2-2k-4)$&$n_1$& $n_2$ &$n_3$\\
        \hline
        3&9&9& \textbf{ 13}& \textbf{ 6}& \textbf{ 3 }& \textbf{ 4  } \\
        4&28&24&\textbf{ 34}& \textbf{ 12 }& \textbf{ 12 }& \textbf{ 10 }  \\
        5& 75 & 50& 68&20 &30 &18  \\
        6& 186& 90& 118&30 &60 &28   \\
        7& 441   & 147& 187&42 &105 &40\\
    \end{tabular}

Table 2: 
Upper bounds on the order of a graph for some values of $\chi _{_{NL}}(G)$.
\end{center}

Table 2 illustrates Theorem~\ref{trees}.
The cases in bold are not feasible because the bound for general graphs (see Theorem~\ref{gb}) is smaller than  the specific bound for trees (see Theorem~\ref{trees}).
The bound for graphs with $\Delta =2$ is given in Proposition~\ref{pro.verticeslk}.
Bounds for unicyclic graphs (see Theorem~\ref{unicyclic}) are the ones for trees adding two unities.  
The last column shows the number of vertices of degree 1 ($n_1$), of degree 2 ($n_2$) and of degree 3 ($n_3$) that a tree attaining the upper bound has to have, as shown  in Theorem~\ref{trees}.

\vspace{.2cm}
For $k=3$, the path $P_9$ is an example attaining the general upper bound.
For $k=4$,  a tree attaining the general upper bound $n=28$ is displayed in Figure~\ref{fig.22.66}.
For $k=5$, a tree of order $66$ is shown in  Figure~\ref{fig.22.66}.
We do not know whether  there are trees of order either $67$ or $68$ with NLC-number 5.
Next proposition shows that  there is a tree attaining the specific upper bound for trees whenever $k\ge 6$.

\begin{figure}[!hbt]
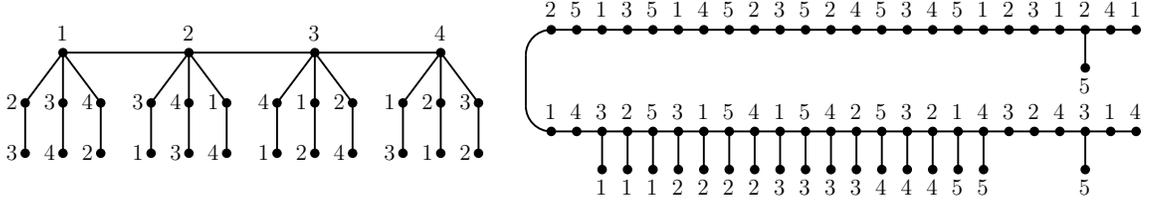

    \begin{center}
        \includegraphics[width=0.41\textwidth,page=9]{figuras}
        \hspace{4mm}
        \includegraphics[width=0.53\textwidth,page=10]{figuras}
        \caption{A tree $T_1$ of order $28$ and $\chi_{NL} (T_1)=4$ (left) and
            a tree $T_2$ of order $66$ and $\chi_{NL} (T_2)=5$ (right).}\label{fig.22.66}
    \end{center}
\end{figure}

Recall that a \emph{caterpillar} is a tree that reduces to a path when pruning all its leaves. 
Clearly, any comb is a caterpillar.

\begin{prop}\label{treess}
For every integer $k\ge 6$, there is a caterpillar $T$  with NLC-number $\chi_{NL} (T)=k$ and  order 
$\displaystyle{n(T)= \frac 12 (k^3+k^2-2k-4)}$.
\end{prop}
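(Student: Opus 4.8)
The plan is to build the required caterpillar $T$ by a surgery on the comb $B_{k(k-1)}$ whose $k$-NL-coloring was constructed in Proposition~\ref{pro.colorcomb}, in the same spirit as the unicyclic construction in Proposition~\ref{pro.tightuniciclico}. Recall that $B_{k(k-1)}$ has $k(k-1)$ leaves, $k(k-1)$ vertices of degree $2$ (the spine endpoints) or rather its spine vertices all have degree $3$ except the two spine ends which have degree $2$. To reach order $\frac12(k^3+k^2-2k-4)=2a_1(k)+a_2(k)-2$ from $n(B_{k(k-1)})=2k(k-1)=2a_1(k)$, I need to attach additional structure contributing $a_2(k)-2=\frac{k(k-1)(k-2)}{2}-2$ vertices, all of which should end up having degree $2$, so that the final degree sequence is exactly $n_1=k(k-1)$, $n_2=\frac{k(k-1)(k-2)}{2}$, $n_3=k(k-1)-2$ as demanded by Theorem~\ref{trees}. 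The natural candidate is a path of color-degree-$2$ vertices realizing all $\binom{k}{2}$ color pairs except those pairs already forced to be "spent" at the attachment points; concretely I would take (a suitable truncation of) the path $P_{a_2(k)}$ obtained from the cycle $C_{a_2(k)}$ of Lemma~\ref{pro.1pairedcoloring} by deleting one edge.

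First I would fix notation: take the comb $B_{k(k-1)}$ with its coloring from Proposition~\ref{pro.colorcomb}, and locate in it a vertex whose neighborhood-color-set will change when we glue. Then take the $k$-NL-coloring of $C_{a_2(k)}$ from Lemma~\ref{pro.1pairedcoloring} in which every vertex has color-degree $2$ and every unordered color pair $\{i,j\}$ is realized by an adjacent pair; delete one edge $xy$ of this cycle to obtain a path $Q$ on $a_2(k)$ vertices in which $x$ and $y$ become the two color-degree-$1$ endpoints. Now glue one end of $Q$ — say the vertex $x$, which I want to have an appropriate color — to a leaf or a spine vertex of the comb by identifying or by adding an edge, and similarly deal with the other end $y$; but since a tree cannot absorb both ends of $Q$ into the comb without creating a cycle, I attach $Q$ by only one of its ends, so that $y$ remains a leaf of $T$. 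To fix the count I must shorten $Q$ by two vertices (replacing $a_2(k)$ by $a_2(k)-2$) exactly as in Lemma~\ref{xnlcaminoellk-1}: delete from the distinguished sequence $1,2,1,2,3,2,3$ the middle vertex colored $2$, and delete one further vertex near an endpoint, keeping the coloring neighbor-locating. The attachment vertex of the comb then must be re-examined: attaching $Q$ raises that comb-vertex's degree and changes its neighborhood-color-set, so I pick it to be a degree-$2$ spine end of the comb colored with a color that makes the resulting color-set distinct from every other same-colored vertex, using Table~1.

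The key verification steps, in order, are: (i) the glued graph is a tree and a caterpillar (deleting all leaves leaves a path — true since the comb's deleted-leaf graph is a path, $Q$ is a path, and we joined them at a spine endpoint); (ii) the degree sequence is exactly $(n_1,n_2,n_3)=(k(k-1),\ \tfrac{k(k-1)(k-2)}{2},\ k(k-1)-2)$, which pins $n(T)=\frac12(k^3+k^2-2k-4)$; (iii) the combined coloring is an NL-coloring: inside the comb-part it is NL by Proposition~\ref{pro.colorcomb}, inside $Q$ it is NL because $Q$'s coloring was NL, and the only vertices whose neighborhood-color-sets changed are the $O(1)$ vertices near the gluing point and near the two deletions, which I check case-by-case against Table~1 and against the color-degree-$2$ structure of $Q$; (iv) $\chi_{_{NL}}(T)\ge k$, which follows from Theorem~\ref{trees} (or from the order lower bound, since $n(T)>2a_1(k-1)+a_2(k-1)-2$ when $k\ge6$) — combined with the $k$-NL-coloring just exhibited this gives $\chi_{_{NL}}(T)=k$. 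The main obstacle I anticipate is step (iii): ensuring that the handful of vertices adjacent to the gluing point and to the removed vertices do not collide with any vertex of the same color elsewhere — this requires choosing the colors $i,j$ of the endpoints of $Q$ and the color of the comb's attachment vertex carefully (so that, e.g., no comb vertex colored $i$ already has neighborhood-color-set equal to the new set produced at the junction), and it is exactly here that the restriction $k\ge6$ is needed, giving enough colors to route around all the forbidden patterns; the calculation is finite but must be organized by the same even/odd and boundary-color case split ($l\in\{1,2,3,k-1,k\}$) used in Table~1.
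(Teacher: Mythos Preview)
Your plan is close in spirit to the paper's, but it contains a genuine gap in the leaf count. If you attach the path $Q$ by an edge to a \emph{spine endpoint} of the comb (as you assert in your caterpillar check ``we joined them at a spine endpoint''), none of the $k(k-1)$ comb leaves is destroyed, while the dangling end $y$ of $Q$ becomes an additional leaf; the resulting tree then has $k(k-1)+1$ leaves. Since every leaf has color-degree~$1$ and any $k$-coloring admits at most $a_1(k)=k(k-1)$ vertices of color-degree~$1$, no $k$-NL-coloring can exist on that tree, no matter how you choose colors. (Attaching instead to a comb \emph{leaf} would repair the degree sequence, but then the new leaf $y$ must reuse exactly the freed (color, neighbor-color) pair of the former leaf---an extra constraint you never address.) A second, more technical slip: the sequence $1,2,1,2,3,2,3$ you invoke to shorten $Q$ is guaranteed by Lemma~\ref{pro.1pairedcoloring}(3) only in $C_{\ell(k)}$, not in $C_{a_2(k)}$; in the $k$-NL-coloring of $C_{a_2(k)}$ every vertex has color-degree~$2$, so a segment $\ldots,1,2,1,\ldots$ (which would force the middle vertex colored $2$ to have color-degree~$1$) cannot occur, and your ``delete the middle $2$'' step does not apply.

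The paper sidesteps both obstacles by starting not from the comb with a one-sidedly attached path, but from the unicyclic graph $U_k$ of Proposition~\ref{pro.tightuniciclico}, where the cycle-path is already attached to the comb at \emph{both} spine endpoints. From $U_k$ it deletes two specific interior comb spine vertices $v_{k-1}^{2}$ and $v_{2}^{k-1}$ together with their pendant leaves (reconnecting the spine across each gap), then breaks the cycle at one comb--cycle junction edge $uv$ and hangs a new leaf of color $k-1$ on $u$ and one of color $2$ on $v$. This keeps the leaf count at exactly $k(k-1)$ throughout, and only the four vertices adjacent to the two contracted spots have altered neighborhood-color-sets; those four are then checked directly against Table~1. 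The restriction $k\ge 6$ enters precisely to ensure that the deleted spine vertices are interior (so that each has two spine neighbors to reconnect) and that the resulting color-sets in Table~3 are distinct from all entries in Table~1.
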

\begin{proof}
    Consider the $k$-NL-coloring of the unicyclic graph $U_k$ of order $\frac 12 k(k-1)(k+2)$ described in Proposition~\ref{pro.tightuniciclico}. 
    Consider the leaf of color 2 hanging from the vertex $x$ colored with $k-1$.
    Delete both vertices and add the edge joining the remaining neighbors of $x$.
    Do the same with the leaf colored with $k-1$ hanging from the vertex $y$ of color $2$.
    Remove the edge joining the vertex $u$ of degree 2 and color $2$ with the vertex $v$ of degree 3 and color $k-1$.
    Attach a leaf colored with $k-1$ to vertex $u$ and a leaf colored with $2$ to vertex $v$.
    We obtain a tree $T_k$ of order $\frac 12 k(k-1)(k+2)-4+2=\frac 12 k(k-1)(k+2)-2$
    (see an example in Figure~\ref{tree6}).
    
    \vspace{.2cm}
    We claim that in such a way we have a $k$-NL-coloring of the tree $T_k$.
    Indeed, we have only  changed the colors of the neighborhoods of the vertices adjacent to $x$ and to $y$ in $T_k$.
    Following the notations of the proof of Proposition~\ref{pro.colorcomb}, we have $x=v_{k-1}^2$ and $y=v_{2}^{k-1}$, and, if $k\ge 6$,
    the vertices adjacent to them in $M_2$ and $M_{k-1}$ are respectively
    $v_{k-2}^2$, $v_{k}^2$ and $v_{1}^{k-1}$, $v_{3}^{k-1}$.
    After deleting the vertices $x$ and $y$ from the comb, the colors of their neighborhoods are given in  Table 3.

\vspace{.5cm}

    \noindent
    \begin{tabular}{r|c|c|c|c}
        $z$
        &
        $v_{1}^{k-1}$
        &
        $v_{3}^{k-1}$
        &
        $v_{k-2}^2$
        &
        $v_{k}^2$
        \\
        \hline
        color of $z$ & 1 & 3 & $k-2$ & $k$
        \\
        \hline
        colors of $N(z)$ in $T_k$
        &
        $\{ 3, k-1,k\}$
        &
        $\{ 1,4, k-1\}$
        &
        $\begin{array}{lc} \{ 2, k-3,k\},& \hbox{ if }k\ge 7 \\ \{ 1,2,6\},&\hbox{ if }k= 6 \end{array} $
        &
        $\{ 1,2, k-2\}$
        \\
    \end{tabular}
\vspace{.2cm}
\begin{center}
Table 3: Set of colors of the neighborhoods.
\end{center}
\vspace{.5cm}

    Using  Table 1, check that there are no vertices with the same color having the same set of colors in their neighborhoods in $T_k$. Therefore,  we have a $k$-NL-coloring of $T_k$.
\end{proof}

\vspace{.2cm}
\begin{figure}[!hbt]
    \begin{center}
        \includegraphics[width=0.86\textwidth,page=11]{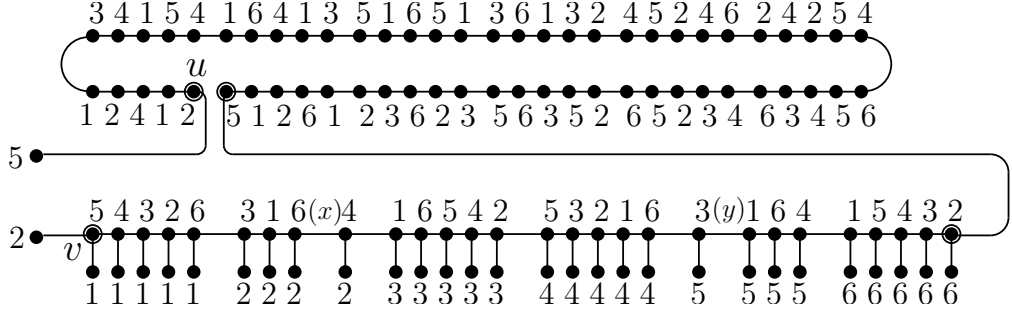}
        \caption{A 6-NL-coloring of  a tree of order  118 constructed from a 6-NL-coloring of a unicyclic graph of order $120$.}\label{tree6}
    \end{center}
\end{figure}

\vspace{.2cm}
Finally, some others results involving the NLC-number of trees are shown.

\vspace{.2cm}
\begin{prop}
Let  $T$ be a tree  of   order  $n(T)=n\geq 5$. 
If $T$ is a star, then $\chi _{_{NL}}(T)=n(T)$; otherwise $\chi _{_{NL}}(T)\leq n(T)-2$.
\end{prop}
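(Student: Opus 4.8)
The star case is immediate. If $T\cong K_{1,n-1}$, then $T$ is a complete multipartite graph of order $n$, so $\chi_{_{NL}}(T)=n$ by the result of \cite{xnl1} already used in the proof of Theorem~\ref{pro.base}. Hence it remains to prove that $\chi_{_{NL}}(T)\le n-2$ when $T$ is not a star.

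The plan is constructive. Since the coloring assigning pairwise distinct colors to all vertices is trivially neighbor-locating, $\chi_{_{NL}}(T)\le n$ always, and I will save two colors by exhibiting an NL-coloring of $T$ with exactly $n-2$ colors in which every color class is a singleton except for two classes $S$ and $S'$, each a pair of non-adjacent vertices, with $S$ and $S'$ disjoint. Since the only monochromatic pairs in such a coloring are the pairs forming $S$ and $S'$, the whole verification reduces to checking, for $S=\{a,b\}$, that the set of colors present in $N(a)$ differs from the set of colors present in $N(b)$, and similarly for $S'$.

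To pick the two pairs I take a longest path $v_0v_1\cdots v_d$ of $T$. As $T$ is not a star, $\mathrm{diam}(T)\ge3$, so $d\ge3$; moreover $v_0$ and $v_d$ are leaves and $v_1\ne v_{d-1}$. If $d\ge4$, I set $S=\{v_0,v_2\}$ and $S'=\{v_1,v_d\}$; these are disjoint and independent because $T$ is acyclic. One then checks: $v_3\notin S\cup S'$ and $v_{d-1}\notin S\cup S'$, so both keep a private color; $N(v_0)=\{v_1\}$ sees only the color of $S'$ whereas $N(v_2)\supseteq\{v_1,v_3\}$ sees the color of $S'$ and the private color of $v_3$; and $N(v_d)=\{v_{d-1}\}$ sees the private color of $v_{d-1}$ whereas $N(v_1)\supseteq\{v_0,v_2\}$ sees the color of $S$. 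If $d=3$, then because $n\ge5$ some vertex lies off the path; it must be adjacent to $v_1$ or $v_2$ (the only non-leaves of the path) and must itself be a leaf, since otherwise the path could be lengthened. After reversing the path if necessary, assume $v_1$ has a leaf neighbor $\ell\notin\{v_0,v_2\}$; note $\ell\notin\{v_0,v_2,v_3\}$. I set $S=\{v_0,v_3\}$ and $S'=\{\ell,v_2\}$, which are disjoint and independent. Here $N(v_0)=\{v_1\}$ and $N(\ell)=\{v_1\}$ each see the private color of $v_1$, while $N(v_3)=\{v_2\}$ sees the color of $S'$ and $N(v_2)\supseteq\{v_1,v_3\}$ sees the private color of $v_1$ together with the color of $S$. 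In every case this yields an NL-coloring with $n-2$ colors.

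I expect the bookkeeping in these neighborhood checks to be entirely routine; the single delicate point—and the reason $n\ge5$ cannot be dropped—is the case $d=3$. There the naive choice $S=\{v_0,v_2\}$, $S'=\{v_1,v_3\}$ fails exactly when $\deg(v_2)=2$, for then $N(v_0)$ and $N(v_2)$ would see the same (single) color; the extra leaf $\ell$ guaranteed by $n\ge5$ is precisely what repairs the construction, which is also why the bound is false for $P_4$. So the only real work is to set up the case split on the longest path so that each subcase admits a valid pair of disjoint non-adjacent pairs.
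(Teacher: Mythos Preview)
Your proof is correct. The approach differs from the paper's in two respects. For $\mathrm{diam}(T)\ge 4$, the paper picks two vertices $x,y$ at distance exactly $4$, takes the intermediate path $x\,a\,b\,c\,y$, and colors the \emph{three} vertices $x,b,y$ with a single color while giving every other vertex its own color; you instead merge two disjoint pairs $\{v_0,v_2\}$ and $\{v_1,v_d\}$ taken from a longest path. For $\mathrm{diam}(T)=3$, the paper does not construct a coloring at all but appeals to the determination of $\chi_{_{NL}}$ for double stars from \cite{cherheslzh02}; you give an explicit $(n-2)$-NL-coloring using the extra leaf $\ell$. Both arguments are elementary and of similar length; yours has the minor advantage of being self-contained for the double-star case, while the paper's single size-$3$ class makes the diameter-$\ge 4$ verification marginally shorter.
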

\begin{proof}
If ${\rm diam}(T)=2$, then $T$ is a star and thus $\chi _{_{NL}}(T)=n$ (see \cite{xnl1}).

If ${\rm diam}(T)=3$, then $T$ is a double star, that is, $T$ has exactly two adjacent vertices $u$ and $v$ which are not leaves; and $u$ is adjacent  to $r$ leaves and $v$ is adjacent  to $s$ leaves, $1\leq r\leq s\leq n-3$. 
Hence, $\chi _{_{NL}}(T)=s+1\leq n-2$ (see \cite{cherheslzh02} and check that the coloring given in Proposition 4.1 is  an NL-coloring).

If ${\rm diam}(T)\geq 4$, then consider a pair of  vertices $x, y$ at distance $4$, three   vertices $a,b,c$ such that $xa, ab, bc, cy$ are edges of $T$ and the following $(n-2)$-NL-coloring: the same color for $x, b$ and $y$, and a different color for every other vertex of $T$. 
Thus, $\chi _{_{NL}}(T)\leq n-2$.
\end{proof}

\vspace{.2cm}
\begin{prop}\label{maximum.degree}
Let $T$ be a tree. If $\chi _{_{NL}}(T)=k$, then $\Delta (T)\le (k-1)^2+\frac{k-1}2$.
\end{prop}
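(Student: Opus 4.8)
The plan is to bound the degree of an arbitrary vertex $u$ of $T$ using the fact that, in an NL-coloring, the neighbors of $u$ must be distinguishable purely by the colors of \emph{their} neighborhoods. Fix a $k$-NL-coloring $\Pi=\{S_1,\dots,S_k\}$ and a vertex $u$; say $u$ has color $i$. Partition $N(u)$ according to the color of each neighbor: for $j\ne i$, let $N_j(u)=N(u)\cap S_j$ (no neighbor can have color $i$). The key observation is that if $w,w'\in N_j(u)$ then $w$ and $w'$ have the same color $j$, so the NL-condition forces them to have distinct neighborhood color-sets; moreover both $w$ and $w'$ are adjacent to $u$ (color $i$), so $i$ lies in both neighborhood color-sets. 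Hence the neighborhood color-sets of the vertices of $N_j(u)$ are distinct subsets of $\{1,\dots,k\}$ each containing $i$, giving the crude bound $|N_j(u)|\le 2^{k-1}$. That is far too weak, so the real work is to sharpen it.

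The sharpening should use that $T$ is a tree, so removing $u$ disconnects $N(u)$ into distinct components: apart from $u$ itself, the neighbors of a vertex $w\in N(u)$ all lie strictly below $w$ (in the rooted tree at $u$), and these ``lower'' neighborhoods of distinct $w,w'\in N(u)$ are vertex-disjoint. I would then argue that the neighborhood color-sets of vertices in $N_j(u)$, with the common element $i$ removed, must still be distinct — so they are distinct subsets of $\{1,\dots,k\}\setminus\{i\}$ — but additionally I want to exploit that a color-degree-$1$ vertex in $N_j(u)$ whose unique neighbor is $u$ is \emph{unique} (its neighborhood color-set is exactly $\{i\}$), and that among color-degree-$2$ vertices the global count is limited. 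Concretely, the target $(k-1)^2+\frac{k-1}{2}$ equals $a_1(k)/k\cdot(k-1)+\text{(something)}$; more usefully $(k-1)^2+\binom{k-1}{2}=(k-1)^2+\frac{(k-1)(k-2)}{2}$, and indeed $(k-1)^2+\frac{k-1}{2}=\frac{(k-1)(2k-1)}{2}=\binom{k-1}{2}+ (k-1)^2 - \binom{k-1}{2}$... I would instead read the target as: each of the $k-1$ ``color slots'' $j\ne i$ contributes at most $k-1$ neighbors of color-degree $\ge 2$ of a controlled type, plus a global budget of $\binom{k-1}{2}$ for the color-degree-$2$ vertices that see $u$ — this matches $(k-1)^2 + \binom{k-1}{2} = (k-1)^2+\frac{(k-1)(k-2)}{2}$, which rearranges to $\frac{k-1}{2}\bigl(2(k-1)+(k-2)\bigr)=\frac{(k-1)(3k-4)}{2}$, not the stated bound; so the correct accounting is finer and I would derive it by carefully classifying neighbors of $u$ by color-degree: the unique color-degree-$1$ neighbor per slot gives at most $k-1$; the color-degree-$2$ neighbors of $u$ are, across the whole graph, at most $a_2(k)$ but locally at most $\binom{k-1}{2}$-ish once we fix that one of their two neighbor-colors is $i$; and higher color-degree neighbors need the subset-distinctness argument, contributing at most $\sum_{t\ge 2}\binom{k-2}{t-1}$ suitably truncated.

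The key steps, in order: (1) root $T$ at $u$ and set up the partition $N(u)=\bigcup_{j\ne i}N_j(u)$ with the disjointness of lower-neighborhoods; (2) prove the per-slot injectivity of the map $w\mapsto(\text{color-set of }N(w))$ on $N_j(u)$, noting every such set contains $i$; (3) isolate the at-most-one color-degree-$1$ vertex in each $N_j(u)$; (4) bound the number of color-degree-$2$ neighbors of $u$, using Remark~\ref{color_deg} / the $a_2(k)$ count and the constraint that one of their neighbor-colors is $i$, to get the $\binom{k-1}{2}$-type term; (5) bound the remaining (color-degree $\ge 3$) neighbors by counting distinct admissible subsets; (6) add up and simplify to $(k-1)^2+\frac{k-1}{2}$, checking the arithmetic identifies the pieces as $(k-1)\cdot(k-1)$ for the generic contribution and $\binom{k-1}{2}-\text{(correction)}$... — honestly, step (6)'s bookkeeping, i.e. getting the constants to land exactly on $(k-1)^2+\frac{k-1}{2}$ rather than something slightly larger, is where I expect the real difficulty. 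The main obstacle is thus the combinatorial accounting: showing that the local constraints at $u$ (per-color injectivity of neighborhood color-sets) combined with the global scarcity of low-color-degree vertices forces precisely this quadratic bound, and not merely the exponential $2^{k-1}$ or a looser polynomial. I would handle it by splitting $N(u)$ into the color-degree-$1$ part (size $\le k-1$), the color-degree-$2$ part (size $\le \binom{k-1}{2}$, since each is determined by $i$ plus one other color among $k-1$... wait, that's $k-1$, so size $\le k-1$ here too — giving $(k-1)+(k-1)=2(k-1)$ so far), and observe that $(k-1)^2+\frac{k-1}{2}-2(k-1)$ must be the budget for color-degree $\ge 3$, which I would cover by the distinct-subsets count; the precise matching of these three budgets to the stated total is the crux and would need the careful (but routine once set up) estimate I have sketched.
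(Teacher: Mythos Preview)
Your approach has a genuine gap: the local distinct-color-set counting you set up cannot, by itself, produce a polynomial bound on $\deg(u)$. In your step~(5) you plan to bound the color-degree $\ge 3$ neighbors in each slot $N_j(u)$ by the number of admissible neighborhood color-sets, but that number is $\sum_{t\ge 2}\binom{k-2}{t-1}$, which is of order $2^{k-2}$. Nothing in your outline cuts this down; the ``disjointness of lower-neighborhoods'' you invoke in step~(1) is never actually used to constrain the \emph{color-sets}, only to observe that subtrees are vertex-disjoint. So the accounting in step~(6) is not merely bookkeeping---the whole scheme, as stated, tops out at an exponential bound.

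The paper's argument is short and conceptually different. It works with \emph{graph degree}, not color-degree, and brings in a global invariant you never touch: the total number of leaves. First, a neighbor $w$ of $u$ with $\deg(w)\le 2$ has neighborhood color-set of size at most $2$ and containing the color $i$ of $u$; per color slot $j$ this gives at most $k-1$ such neighbors, hence at most $(k-1)^2$ in total. Second---and this is the missing idea---root $T$ at $u$: the subtree below each neighbor $w$ contributes at least one leaf of $T$, and at least two leaves when $\deg(w)\ge 3$. Thus, writing $m_3$ for the number of neighbors of $u$ with degree $\ge 3$, one gets $n_1\ge \deg(u)+m_3$. Combined with the global bound $n_1\le a_1(k)=k(k-1)$ (leaves have color-degree $1$) and with $\deg(u)-m_3\le (k-1)^2$, one obtains
\[
2\deg(u)\le (k-1)^2 + k(k-1) = (k-1)(2k-1),
\]
which is exactly $\deg(u)\le (k-1)^2+\tfrac{k-1}{2}$. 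Your outline never invokes the leaf-count formula (equivalently, the identity $n_1=2+\sum_{\deg(v)\ge 3}(\deg(v)-2)$), and without it the bound cannot be reached.
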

\begin{proof}
    Suppose to the contrary that $\Delta (T) > (k-1)^2+\frac{k-1}2$.
    If $u$ is a vertex of degree $\Delta$ and color $k$, then its neighbors have colors $1,\dots ,k-1$. 
    It can be easily proved that $u$ has at most $(k-1)^2$ neighbors of degree at most 2. 
    Hence, $u$ must have $>\frac{k-1}2$ neighbors of degree at least 3. 
    Thus, the number $n_1$ of leaves  satisfies $n_1>(k-1)^2+2\frac{k-1}2=k(k-1)$. 
    But $T$ has at most $k(k-1)$ leaves, getting a contradiction.
\end{proof}

\section{Conclusions and open problems}\label{op}

We have determined the NLC-number of  paths, cycles (see Theorem~\ref{pro.base} and Theorem~\ref{thm.chi_CiclosCaminos}), 
and also of  fans and wheels (see Theorem~\ref{t:fan_wheel_small} and Theorem~\ref{t:fan_wheel}).

In \cite{xnl1}, the order of a graph is bounded from above by a function of its NLC-number.
In the present paper, we have achieved better bounds both for unicyclic graphs (see Theorem~\ref{unicyclic}) and for trees (see Theorem~\ref{trees}).
Moreover, we have shown that these new bounds are tight for NLC-number $k\geq 5$ in the case of unicyclic graphs (see Proposition~\ref{pro.tightuniciclico}); and for NLC-number $k\geq 6$ in  the case of trees (see Proposition~\ref{treess}). 
In this last case, we have proved that the bound is achieved by a caterpillar.
For trees with NLC-number $k\geq 5$, the maximum order according to our bound is $68$, but the maximum order that we have obtained is $66$. 
The existence or not  of trees with NLC-number $5$ and order $67$ or $68$ remains open.

According to Theorem \ref{gb}(2), if $\chi _{_{NL}}(G)=k$ and $\Delta(G)\le k-1$, then $\displaystyle{n(G)\le \, \sum_{j=1}^{\Delta(G) } a_j(k)}.$
In general, it is not known whether or not this bound is tight. 
For $\Delta(G) = k-1$, this bound match with the one given in Theorem~\ref{gb}(1) which is known to be tight.
Since we have proved that if $G$ is a cycle, then $n= a_1(k)+ a_2(k)$, this fact  implies that the referred bound is tight for graphs with maximum degree $\Delta(G)=2$.
What does it happen for graphs with $\Delta(G)=3$? 
We have shown that if $G$ is either a tree or an unicyclic graph, then $n\leq 2a_1(k)+a_2(k)$ and, obviously, $2a_1(k)+a_2(k)< a_1(k)+a_2(k)+a_3(k)$, so the bound is not achieved by these graphs.
Is it achieved by other kind of graphs? 
This is an open problem, not only for graphs with maximum degree $\Delta=3$,  but also  for graphs with  $4\leq \Delta \leq k-2$.

We know that  the bound given by the Proposition~\ref{maximum.degree} is not tight. 
Indeed, for a tree $T$ with NLC-number  $3$,   Proposition~\ref{maximum.degree} states that $\Delta(T)\leq 5$. 
However, by  Theorem~\ref{gb}(1),  we have that $n(T)\leq 9$. 
Then, it is easy to verify that $\chi _{_{NL}}(T)=3$ and $n(T)\leq 9$ implies $\Delta(T)\leq 4$.

In general, we postulate  the following.

\begin{conj}\label{treedelta} Let $k\geq 2$.
If $T$ is a tree with $\chi _{_{NL}}(T)=k$, then $\Delta(T) \le (k-1)^2$.
Moreover, this bound is tight for every integer $k\ge 2$.
\end{conj}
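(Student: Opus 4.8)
The plan is to prove the two directions separately: the upper bound $\Delta(T)\le (k-1)^2$ and, for each $k\ge 2$, the construction of an extremal tree achieving it. For the upper bound, I would refine the counting argument already used in Proposition~\ref{maximum.degree}. Let $u$ be a vertex of maximum degree $\Delta$; assume $u$ has color $k$, so its $\Delta$ neighbors are distributed among the $k-1$ other colors. The neighbors of $u$ that are leaves contribute nothing beyond their color, so among the neighbors of $u$ of a fixed color $i$, at most one can be a leaf (two leaves of color $i$ adjacent to $u$ would be indistinguishable), and the remaining neighbors of color $i$ must have pairwise distinct neighborhood color-sets, each of which contains $k$. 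The key strengthening over Proposition~\ref{maximum.degree} is to argue that a neighbor $w$ of $u$ of degree $2$ has color-degree exactly $2$, with one neighbor being $u$ (color $k$) and one other neighbor of some color $j\ne k$; two degree-$2$ neighbors of $u$ with the same color $i$ and the same ``other color'' $j$ are indistinguishable. Hence for each color $i\ne k$ there are at most $k-1$ neighbors of $u$ of degree $\le 2$ (one leaf giving color-set $\{k\}$, and at most one degree-$2$ vertex for each $j\notin\{i,k\}$, plus possibly one with the other neighbor also colored $k$ — here one must be careful, but the total over the $k-2$ choices of $j$ plus the leaf gives at most $k-1$). Summing over the $k-1$ colors yields at most $(k-1)^2$ neighbors of degree $\le 2$. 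If $u$ had any neighbor of degree $\ge 3$, the leaf-counting inequality $n_1=\sum_{\deg(v)\ge 3}(\deg(v)-2)+2$ from Inequality~\ref{n3}, combined with $\deg(u)-2 > (k-1)^2-2$ and the contribution $\ge 1$ from each degree-$\ge 3$ neighbor of $u$, would force $n_1 > k(k-1)$, contradicting $n_1\le a_1(k)=k(k-1)$. So all neighbors of $u$ have degree $\le 2$, giving $\Delta\le (k-1)^2$.

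For tightness, I would build, for each $k\ge 2$, a tree $T$ with $\chi_{_{NL}}(T)=k$ and a vertex $u$ of degree exactly $(k-1)^2$. Color $u$ with $k$. For each ordered assignment we attach to $u$: for each color $i\in\{1,\dots,k-1\}$ one leaf of color $i$ (that is $k-1$ leaves, realizing color-set $\{k\}$ with distinct colors), and for each color $i$ and each $j\in\{1,\dots,k-1\}\setminus\{i\}$ one path of length $2$ whose first vertex has color $i$ and whose endpoint has color $j$ — that is $(k-1)(k-2)$ such pendant paths. This already gives degree $(k-1)+(k-1)(k-2)=(k-1)^2$ at $u$. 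The middle vertices $v^{i,j}$ (color $i$, neighbors colored $k$ and $j$) all have distinct neighborhood color-sets $\{k,j\}$ for fixed $i$, and the leaves of color $i$ have neighborhood color-set $\{k\}$; across different colors $i$ the vertices simply have different colors, so within this gadget the NL-condition holds. One must then check that the endpoints of the pendant paths (which are leaves of various colors) and the root $u$ do not collide with anything; since each such endpoint has a unique neighborhood color-set $\{i\}$ determined by the color $i$ of the middle vertex it hangs from, and there is only one endpoint per $(i,j)$ pair... wait — endpoints with the same color $j$ hanging from middle vertices of the same color $i$ but that cannot happen since there is exactly one $(i,j)$ pair; but two endpoints of color $j$ hanging from middle vertices of colors $i$ and $i'$ both have neighborhood color-set $\{i\}$ and $\{i'\}$ respectively, which differ. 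So the construction is NL. Verifying $\chi_{_{NL}}(T)=k$ exactly (not less) is immediate since $T$ contains $k$ colors and $\chi_{_{NL}}\ge \chi\ge 2$; and it contains an induced structure (e.g. the $k-1$ leaves of $u$ together with $u$ forming $K_{1,k-1}$, or more carefully a subgraph forcing $k$ colors) — alternatively one shows a $(k-1)$-coloring is impossible by the color-degree counting of Remark~\ref{color_deg} applied to $u$.

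The main obstacle I anticipate is the upper-bound direction, specifically the bookkeeping that caps the number of degree-$\le 2$ neighbors of $u$ of a fixed color at exactly $k-1$ rather than something slightly larger: one must correctly handle the degree-$2$ neighbor whose ``other'' neighbor is also colored $k$, and make sure no overcount or undercount occurs when the other neighbor has color $i$ itself (impossible, since that neighbor would be adjacent to a color-$i$ vertex... actually that is allowed, the middle vertex has color $i$ and its other neighbor could have any color $\ne i$). Getting this case analysis airtight — and then correctly invoking Inequality~\ref{n3} to rule out any degree-$\ge 3$ neighbor of $u$ — is where the real work lies; the extremal construction, by contrast, is routine once the right gadget is identified. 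A secondary subtlety is that the construction must genuinely have $\chi_{_{NL}}$ equal to $k$, so a short lower-bound argument (via Remark~\ref{color_deg} forcing $\ge k$ colors to accommodate the $(k-1)^2$-degree vertex) should be included.
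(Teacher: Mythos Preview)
The statement you are attempting to prove is labelled a \emph{Conjecture} in the paper; the authors do not prove it. They only establish the weaker bound $\Delta(T)\le (k-1)^2+\tfrac{k-1}{2}$ (Proposition~\ref{maximum.degree}) and exhibit, via Figure~\ref{xnlgr}, a tree with $\Delta=(k-1)^2$ and $\chi_{_{NL}}=k$. Your tightness construction is essentially that figure: a vertex $u$ of color $k$ with $k-1$ pendant leaves (one per color $i\ne k$) and $(k-1)(k-2)$ pendant paths of length~2 (one per ordered pair $(i,j)$). The verification that this is a $k$-NL-coloring is correct, and the lower bound $\chi_{_{NL}}\ge k$ follows immediately since the $k-1$ leaves attached to $u$ must receive pairwise distinct colors, all different from the color of $u$.

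Your upper-bound argument, however, has a genuine gap that is \emph{not} a matter of bookkeeping. You correctly bound the number of degree-$\le 2$ neighbors of $u$ by $(k-1)^2$ --- but the paper already has this (it is the ``at most $(k-1)^2$ neighbors of degree at most~2'' step in the proof of Proposition~\ref{maximum.degree}). The failure is in the next step. Assume $\Delta\ge (k-1)^2+1$; then $u$ has $m\ge \Delta-(k-1)^2\ge 1$ neighbors of degree $\ge 3$, and Inequality~(\ref{n3}) gives
\[
n_1 \;\ge\; 2 + (\Delta-2) + m \;=\; \Delta + m \;\ge\; 2\Delta-(k-1)^2.
\]
For this to exceed $k(k-1)=(k-1)^2+(k-1)$ you need $\Delta>(k-1)^2+\tfrac{k-1}{2}$, which is exactly the threshold of Proposition~\ref{maximum.degree} and no better. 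In particular, taking $\Delta=(k-1)^2+1$ and $m=1$ yields only $n_1\ge (k-1)^2+2$, which does not contradict $n_1\le k(k-1)$ for any $k\ge 3$. So the claim ``would force $n_1>k(k-1)$'' is false as stated; ruling out even a single degree-$\ge 3$ neighbor of $u$ requires an idea beyond the leaf-counting inequality, and your proposal does not supply one. The conjecture remains open.
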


For an example of a tree $T$ with $\chi _{_{NL}}(T)=k$ and $\Delta(T) = (k-1)^2$, see Figure~\ref{xnlgr}.

\begin{figure}[!hbt]
\begin{center}
\includegraphics[width=0.75\textwidth,page=12]{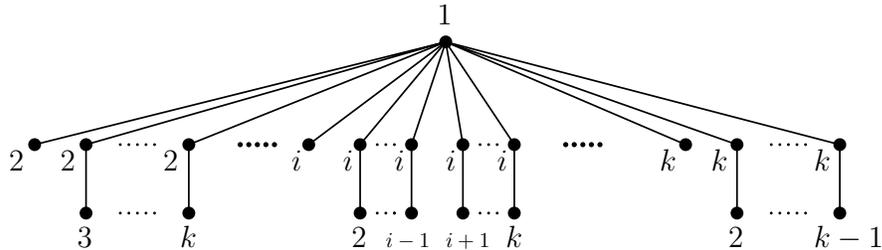}
\caption{ Tree $T$ with $\Delta(T)=(k-1)^2$ and  $\chi _{_{NL}}(T)=k$.}\label{xnlgr}
\end{center}
\end{figure}

If $G$ is a connected graph of diameter ${\rm diam}(G)=d\leq 23$, then, it is possible  verify 
that $\chi _{_{NL}} (G) \ge \chi _{_{NL}} (P_{d+1})$. 
We propose the following conjecture.

\begin{conj}
    Let $G$ be a graph of diameter $d$.
    Then, $\chi _{_{NL}} (G) \ge \chi _{_{NL}} (P_{d+1})$.
\end{conj}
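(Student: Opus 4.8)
The plan is to recast the inequality as a bound on the diameter of a $k$-NL-colorable graph and then to attack it through the structure of a shortest path. Combining Proposition~\ref{pro.ordenlk}, Theorem~\ref{pro.base} and Theorem~\ref{thm.chi_CiclosCaminos}, one obtains the closed form $\chi_{_{NL}}(P_m)=\min\{k\ge 3:\ m\le \ell(k)\}$ for every $m\ge 3$, together with $\chi_{_{NL}}(P_2)=2$. Since $\ell$ is strictly increasing, the conjecture is equivalent (apart from the trivial cases $d\le 1$, where $G$ is complete and $\chi_{_{NL}}(K_n)=n\ge 2=\chi_{_{NL}}(P_2)$) to the statement that \emph{every graph $G$ with $\chi_{_{NL}}(G)=k\ge 3$ satisfies ${\rm diam}(G)\le \ell(k)-1=k\binom{k}{2}-1$}. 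So the goal is to bound the diameter of a $k$-NL-colorable graph.

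To do this, fix a diametral pair $u,w$, a shortest path from $u$ to $w$, say $v_0v_1\cdots v_d$ (which is an induced $P_{d+1}$ of $G$), and a $k$-NL-coloring $\Pi=\{S_1,\dots ,S_k\}$ of $G$; then restrict $\Pi$ to $V(P)=\{v_0,\dots ,v_d\}$. For an internal vertex $v_i$ the two path-neighbours contribute a set $R_i$ of size $1$ or $2$ of colors distinct from that of $v_i$, and $R_0,R_d$ are singletons. If it happened that the set of colors of $N_G(v_i)$ equalled $R_i$ for every $i$ --- that is, no vertex of $P$ had an off-path neighbour bringing in a color not already present around it on $P$ --- then within each color class the sets $R_i$ attached to the path vertices of that class would be pairwise distinct; as there are only $(k-1)+\binom{k-1}{2}=\binom{k}{2}$ nonempty subsets of size at most $2$ of the remaining $k-1$ colors, each color class would meet $V(P)$ in at most $\binom{k}{2}$ vertices, whence $d+1\le k\binom{k}{2}=\ell(k)$, as wanted.

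The difficulty --- and the reason this remains a conjecture --- is that a path vertex $v_i$ may have neighbours outside $P$ whose colors are not in $R_i$, so two path vertices of the same color with equal $R_i$ may still be separated in $G$ through their off-path neighbourhoods. One must therefore control how many path vertices can carry such extra colors. The structural handle is that every neighbour of $v_i$ lies in the BFS layers $L_{i-1}\cup L_i\cup L_{i+1}$ around $v_0$, so off-path neighbours of nearby path vertices occupy nearby layers and are themselves constrained by the NL-coloring; the idea is to amortize each extra color appearing along $P$ against the vertices it forces into those layers. The main obstacle is that $n(G)$ can be as large as $k(2^{k-1}-1)$ by Theorem~\ref{gb}, which is exponentially larger than $\ell(k)$, so one cannot simply charge extra colors against the order of $G$: a purely local, layer-by-layer amortization along the shortest path seems to be required, and making it tight enough to land exactly at $\ell(k)-1$ is the crux.

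One could instead try induction. The tempting step --- replace $G$ by the subgraph induced by the first $d$ BFS layers from $w$, which still contains a $P_d$ and has diameter at least $d-1$ --- is hindered by the fact that $\chi_{_{NL}}$ is not monotone under taking subgraphs, so vertex deletions can only be handled delicately. A safer plan is to use the already verified range ${\rm diam}(G)\le 23$ as a base case and to prove a stabilization statement: once the diameter exceeds an explicit function of $k$, the graph cannot be $k$-NL-colored. Either way, the hard part is the local control of the off-path colors; the reduction to the diameter bound, the counting along the path, and the small-diameter base are routine by comparison.
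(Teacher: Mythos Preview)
This statement is posed in the paper as an open \emph{conjecture}; the paper gives no proof, only the remark that the inequality has been checked for connected graphs of diameter at most~$23$. Your submission is consistent with this: it is an exploration of approaches rather than a proof, and you say so explicitly (``the reason this remains a conjecture'', ``the hard part is the local control of the off-path colors'').

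Your reformulation is correct and useful: for $k\ge 3$ the conjecture is equivalent to the diameter bound ${\rm diam}(G)\le \ell(k)-1$ for every graph with $\chi_{_{NL}}(G)=k$, and the restriction of a $k$-NL-coloring to a diametral path does yield the bound $d+1\le \ell(k)$ \emph{provided} no path vertex sees an off-path neighbour of a new color. The obstruction you identify --- that off-path neighbours may supply extra colors, so two path vertices of the same color with the same on-path profile can still be separated in $G$ --- is exactly the gap, and your proposed amortization via BFS layers is a reasonable line of attack but, as you acknowledge, not carried to completion.

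In short, there is no proof in the paper to compare against: the paper leaves the statement open, and so do you.
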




\begin{thebibliography}{99}


\bibitem{xnl1}
{\small {\sc L. Alcon, M. Gutierrez, C. Hernando, M. Mora and I. M. Pelayo:}
{\it Neighbor-locating colorings in graphs.}
{Theor. Comp. Sci.}, In Press. }





\bibitem{baas13}
{\small {\sc E. T. Baskoro and Asmiati:}
{\it Characterizing all trees with locating-chromatic number 3.}
{Electron. J. Graph Theory Appl.}, {\bf 1} (2) (2013), 109--117.}






\bibitem{bean15}
{\small {\sc A. Behtoei and M. Anbarloei:}
 {\it A Bound for the Locating Chromatic Numbers of Trees.}
 {Trans. Comb.}, {\bf 4} (1) (2015), 31--41.}



\bibitem{beom11}
{\small {\sc A. Behtoei and B. Omoomi:}
 {\it On the locating chromatic number of Kneser graphs.}
{Discrete Appl. Math.}, {\bf 159} (18) (2011), 2214--2221.}






\bibitem{chagiha08}
{\small {\sc G. G. Chappell, J. Gimbel  and  C. Hartman:}
{\it Bounds on the metric and partition dimensions of a graph.}
{Ars Combinatoria,} {\bf 88} (2008), 349--366.}





\bibitem{cherheslzh02}
{\small {\sc G. Chartrand, D. Erwin, M. A. Henning, P. J. Slater and P. Zhang:}
{\it The locating-chromatic number of a graph.}
{Bull. Inst. Combin. Appl.}, {\bf 36} (2002), 89--101.}




\bibitem{cherheslzh03}
{\small {\sc G. Chartrand, D. Erwin, M. A. Henning, P. J. Slater and P. Zhang:}
{\it Graphs of order $n$ with locating-chromatic number $n-1$.}
{Discrete Math.}, {\bf 269} (2003), 65--79.}





\bibitem{ChaSaZh00}
{\small {\sc G. Chartrand, E. Salehi and P. Zhang:}
{\it The partition dimension of a graph.}
{Aequationes Mathematicae},  {\bf 59} (2000), 45--54.}




\bibitem{fegooe06}
{\small {\sc M. Fehr, S. Gosselin, and O. R. Oellermann:}
{\it The partition dimension of Cayley digraphs.}
{Aequationes Mathematicae}, {\bf 71} (1-2) (2006), 1--18.}




\bibitem{ferogo14}
{\small {\sc H. Fernau, J. A.  Rodr\'iguez-Vel\'azquez  and I. Gonz\'alez-Yero:}
{\it On the partition dimension of unicyclic graphs.}
{Bull. Math. Soc. Sci. Math. Roumanie}, {\bf 57(105)} (4) (2014), 381--391.}






\bibitem{gsrm}
{\small {\sc C. Grigorious, S. Stephen, R. Rajan and M. Miller:}
 {\it On the partition dimension of circulant graphs.}
  {Computer Journal} {\bf 60} (2017), 180-–184.}



\bibitem{hararymelter}
{\small {\sc F. Harary and R. Melter:}
{\it On the metric dimension of a graph.}
{Ars Combinatoria}, {\bf 2} (1976), 191--195.}



\bibitem{hemope16}
{\small {\sc C. Hernando, M. Mora and I. M. Pelayo:}
	{\it Resolving dominating partitions in graphs.}
	{Discrete Appl. Math.}, In Press. }









\bibitem{royele14}
{\small {\sc J. A. Rodr\'iguez-Vel\'azquez, I. Gonz\'alez Yero and M. Lemanska:}
 {\it On the partition dimension of trees.}
{Discrete Appl. Math.}, {\bf 166}  (2014), 204--209.}


\bibitem{slater}
{\small {\sc P. J. Slater:}
	{\it  Leaves of trees.}
	\newblock {Proc. 6th Southeastern Conf. on Combinatorics,
		Graph Theory, and Computing, Congr. Numer.}, {\bf 14}  (1975), 549--559.}
		
		
		
\bibitem{slater2}
{\small {\sc P. J. Slater:}
{\it Dominating and reference sets in a graph.}
{ J. Math. Phys. Sci.,} {\bf 22} (4) (1988) 445-–455.}









\bibitem{webasiut13}
{\small {\sc  D. Welyyanti, E. T. Baskoro, Darmaji, R. Simanjuntak and S. Uttunggadewa:}
{\it On locating-chromatic number of complete n-ary tree.}
{AKCE Int. J. Graphs Comb.}, {\bf 10} (3) (2013), 309--315.}



\end{thebibliography}
\end{document}